\newtheorem{theorem}{Theorem}
\newtheorem{lemma}{Lemma}
\newcommand{\p}[1]{(#1)}
\newcommand{\pb}[1]{\left(#1\right)}
\newcommand{\st}[1]{\left\{#1\right\}}
\newcommand{\abk}[1]{\left<#1\right>}
\newcommand{\fl}[1]{\left\lfloor#1\right\rfloor}
\newcommand{\quot}[1]{``#1''}
\newcommand{\mbb}{\mathbf}
\newcommand{\As}{\lambda}
\newcommand{\Bs}{\mu}
\newcommand{\Cs}{\gamma}
\newcommand{\Rh}{B}
\begin{document}
\title{The Behavior of a Three-Term Hofstadter-Like Recurrence with Linear Initial Conditions}
\author{Nathan Fox\footnote{Department of Quantitative Sciences, Canisius University, Buffalo, New York,
\texttt{fox42@canisius.edu}
}}
\date{}

\maketitle

\begin{abstract}
In this paper, we study the three-term nested recurrence relation 	$\Rh(n)=\Rh\p{n-\Rh\p{n-1}}+\Rh\p{n-\Rh\p{n-2}}+\Rh\p{n-\Rh\p{n-3}}$ subject to initial conditions where the first $N$ terms are the integers $1$ through $N$. This recurrence is the three-term analog of Hofstadter's famous $Q$-recurrence $Q(n)=Q(n-Q(n-1))+Q(n-Q(n-2))$. Nested recurrences are highly sensitive to their initial conditions. Some initial conditions lead to finite sequences, others lead to predictable sequences, and yet others lead to sequences that appear to be chaotic and infinite.
A corresponding study to this one was previously carried out on the $Q$-recurrence. As with that work, we consider two families of sequences, one where terms with nonpositive indices are undefined and a second where terms with nonpositive indices are defined to be zero. We find similar results here as with the $Q$-recurrence, as we can completely characterize the sequences for sufficiently large $N$. The results here are, in a sense, simpler, as our sequences are all finite for sufficiently large $N$.
\end{abstract}

\section{Introduction}
The Hofstadter $Q$-recurrence~\cite{geb}
\[
Q\p{n}=Q\p{n-Q\p{n-1}}+Q\p{n-Q\p{n-2}}
\]
has been the subject of numerous studies. Hofstadter's original $Q$-sequence starts with initial conditions $Q\p{1}=Q\p{2}=1$. This sequence has tantalizing properties that have thus far evaded proof, though they have been the subject of statistical studies~\cites{pinn,spotbased}. As such, most studies of the $Q$-recurrence analyze sequences generated by other initial conditions~\cites{golomb, rusk,symbhof, genrusk, gengol}. One recent approach involves studying a family of initial conditions described by a parameter~\cite{hof1thruN}.

In this paper, we apply the parametrized initial condition approach to a different recurrence, the three-term Hofstadter-like recurrence
\[
\Rh\p{n}=\Rh\p{n-\Rh\p{n-1}}+\Rh\p{n-\Rh\p{n-2}}+\Rh\p{n-\Rh\p{n-3}}.
\]
This recurrence is known to generate a well-behaved sequence~\cite{slowtrihof} when given initial conditions $\Rh\p{1}=1$, $\Rh\p{2}=2$, $\Rh\p{3}=3$, $\Rh\p{4}=4$, and $\Rh\p{5}=5$. For convenience, we refer to this well-behaved sequence as the \emph{$\Rh$-sequence}. Aside from that one article, this recurrence has not been widely reported on. This is presumably because most natural initial conditions lead to finite sequences. If a sequence $\Rh^*\p{n}$ generated by the $\Rh$-recurrence ever has $\Rh^*\p{n-1}\geq n$ or $\Rh^*\p{n-1}\leq0$, then $\Rh^*\p{n}$ would be undefined. When this sort of behavior occurs, we say that the sequence \emph{dies} after $n-1$ terms, or that it dies at index $n$. This assumes, as is standard, that the first term defined by the initial conditions is $\Rh^*(1)$. In this paper, we consider such initial conditions, but we also consider infinite initial conditions that define values for $\Rh^*(n)$ when $n\leq0$. In this realm, $\Rh^*(n-1)\geq n$ does not lead to sequence death, but $\Rh^*(n-1)\leq 0$ still does. So, to avoid some confusion later in the paper, when a sequence dies because $\Rh^*\p{n-1}\leq0$, we say that the sequence \emph{ends}.

\subsection{Notation}

Going forward, the only recurrence relation we discuss is the $\Rh$-recurrence, but we study it with many different initial conditions.  We introduce analogous notation to~\cite{hof1thruN}. The notation $\Rh\p{n}$ refers to the $n$th term of the $\Rh$-sequence itself.  The notation $\Rh^*\p{n}$ refers to a generic sequence that satisfies the $\Rh$-recurrence.  For any other specific sequence satisfying the $\Rh$-recurrence, we use $\Rh$ with a subscript that we define for that particular sequence.

We use angle brackets to denote our initial conditions.  For example, $\abk{1,2,3,4,5}$ is shorthand for the initial conditions for the $\Rh$-sequence.  Sometimes, we wish to define $\Rh^*\p{n}=0$ for all $n\leq0$ in order to prevent our sequences from dying too soon.  This convention is noted with a symbol $\bar{0}$ followed by a semicolon at the start of the initial conditions.  For example, $\abk{\bar{0};1,1,1}$ is shorthand for $\Rh^*\p{n}=0$ for $n\leq0$, $\Rh^*\p{1}=1$, $\Rh^*\p{2}=1$, and $\Rh^*\p{3}=1$.

\subsection{Structure of this Paper}
This paper's structure mirrors that of the analogous study on the $Q$-recurrence~\cite{hof1thruN}. In Section~\ref{s:wd}, we characterize the sequences generated by the $\Rh$-recurrence via family of initial conditions of the form $\abk{1,2,3,\ldots,N}$. Then, in Section~\ref{s:sd}, we study the more general initial conditions $\abk{\bar{0};1,2,3,\ldots,N}$.  Finally, we suggest some future research directions in Section~\ref{s:future}.

\section{Behavior of the $\Rh$ Recurrence with Linear Initial Conditions}\label{s:wd}

In this section, we consider sequences obtained from the $\Rh$-recurrence and initial conditions of the form $\abk{1,2,3,\ldots,N}$ for some integer $N\geq3$.  Henceforth, we denote this sequence for a given value of $N$ by $\Rh_N$.

We have the following result, which characterizes the behaviors of almost all of these sequences.

\begin{theorem}\label{thm:1thruN}
For $N=3$, $N=4$, or $N\geq10$, the sequence $\Rh_N$ dies.  Furthermore, if $N\geq14$, the sequence has exactly $N+24$ terms.
\end{theorem}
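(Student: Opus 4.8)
The plan is to compute the terms of $\Rh_N$ directly from the recurrence, starting from $\Rh_N(n)=n$ for $1\le n\le N$, and to locate the exact index where an argument of $\Rh_N$ first becomes nonpositive. The cases $N=3$ and $N=4$ are immediate: one checks by hand that $\Rh_3(4)=\Rh_3(1)+\Rh_3(2)+\Rh_3(3)=6$ forces $\Rh_3(5)$ to call $\Rh_3(-1)$, and that $\Rh_4(5)=6$ forces $\Rh_4(6)$ to call $\Rh_4(0)$, so both die. The four boundary cases $10\le N\le13$ will be handled by the same kind of explicit finite computation, run all the way to the death of the sequence. The real content is the uniform treatment of $N\ge14$.

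For the uniform part, I would first show that for $N\ge6$ the terms obey a regular block pattern: for $j\in\{0,1,2\}$,
\begin{align*}
\Rh_N(N+4j+1)&=3j+6, & \Rh_N(N+4j+2)&=N+3j+1,\\
\Rh_N(N+4j+3)&=N+3j+2, & \Rh_N(N+4j+4)&=N+3j+3,
\end{align*}
and moreover the $j=3$ instances of the first three of these also hold, giving $\Rh_N(N+13)=15$, $\Rh_N(N+14)=N+10$, $\Rh_N(N+15)=N+11$. Each equation follows by substituting already-known values into the recurrence; the pattern persists because in this range the three arguments $n-\Rh_N(n-1)$, $n-\Rh_N(n-2)$, $n-\Rh_N(n-3)$ all fall either in $\{1,\dots,6\}$ or just below $N$. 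It breaks at $n=N+16$: there the term $\Rh_N(N+13)=15$ makes $n-\Rh_N(n-3)=N+1$ jump past $N$, so $\Rh_N(N+16)=\Rh_N(5)+\Rh_N(6)+\Rh_N(N+1)=17$ rather than $N+12$.

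From there, for $N\ge14$ I would compute the remaining nine terms one at a time:
\begin{align*}
\Rh_N(N+16)&=17, & \Rh_N(N+17)&=N+13, & \Rh_N(N+18)&=18,\\
\Rh_N(N+19)&=N+13, & \Rh_N(N+20)&=N+15, & \Rh_N(N+21)&=N+16,\\
\Rh_N(N+22)&=22, & \Rh_N(N+23)&=21, & \Rh_N(N+24)&=2N+11,
\end{align*}
each a routine substitution (the most restrictive one uses only $\Rh_N(9)=9$, so needs $N\ge9$), while checking along the way that no argument at an index $\le N+24$ is nonpositive, so the sequence does not die early. Finally, $\Rh_N(N+25)$ would require $\Rh_N\bigl((N+25)-\Rh_N(N+24)\bigr)=\Rh_N(14-N)$, which is undefined since $14-N\le0$ for $N\ge14$. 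Hence $\Rh_N$ dies at index $N+25$ and has exactly $N+24$ terms; for $10\le N\le13$ the analogous computations show that $\Rh_N(N+24)=2N+11\le N+24$, so the sequence runs a little further, and one just follows it to its (finite) end.

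The main obstacle is purely organizational: there is no single closed form valid for all of $\Rh_N(N+1),\dots,\Rh_N(N+24)$ — the pattern genuinely changes near $n=N+16$ — and at every step one must confirm that the three arguments land on already-defined indices, substitute the right stored values, and keep track of the smallest $N$ for which that step is legitimate. Once the bookkeeping is done, the conclusion is forced by the single identity $\Rh_N(N+24)=2N+11$, which exceeds $N+24$ exactly when $N\ge14$.
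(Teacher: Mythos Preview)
Your approach is correct and essentially identical to the paper's: compute the $24$ post-initial terms symbolically (the paper lists them in an appendix, valid for $N\ge9$), observe that $\Rh_N(N+24)=2N+11$ forces death at index $N+25$ when $N\ge14$, and dispatch the sporadic cases $N\in\{3,4,10,11,12,13\}$ by direct finite computation. Your grouping of the first fifteen post-initial terms into a period-$4$ block is a tidier presentation than the paper's raw list, but the underlying argument is the same.
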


\begin{proof}
Computing terms, we obtain that $\Rh_3\p{4}=6$, $\Rh_4\p{5}=6$, $\Rh_{10}\p{1015}=1036$, $\Rh_{11}\p{117}=120$, $\Rh_{12}\p{45}=47$, and $B_{13}\p{73}=82$. So, these sequences all die~\cite{oeis}*{A373230, A373231, A373232, A373233}.

Now, we treat $N$ as a symbolic parameter and apply the process from~\cite{hof1thruN}. This allows us to compute $24$ terms following the initial conditions.  These $24$ terms are:
\begin{align*}
6,\, &N + 1,\, N + 2,\, N + 3,\, 9,\, N + 4,\, N + 5,\, N + 6,\, 12,\, N + 7,\, N + 8,\, N + 9,\, 15,\, N + 10,\, \\&N + 11,\, 17,\, N + 13,\, 18,\, N + 13,\, N + 15,\, N + 16,\, 22,\, 21,\, 2N + 11.
\end{align*}
See Appendix~\ref{app:Qwd} for explicit computations of these terms, along with a bound on the values of $N$ for which that computation and all previous computations are valid.  In particular, note that the calculations are valid for $N\geq9$.

The last term we have is $\Rh\p{N+24}=2N+11$.  We try to compute $\Rh_N\p{N+25}$:
\begin{align*}
    \Rh_N\p{N+25}&=\Rh_N\p{N+25-\Rh_N\p{N+24}}+\Rh_N\p{N+25-\Rh_N\p{N+23}}\\
    &\phantom{=}+\Rh_N\p{N+25-\Rh_N\p{N+22}}\\
    &=\Rh_N\p{N+25-\pb{2N+11}}+\Rh_N\p{N+25-21}+\Rh_N\p{N+25-22}\\
    &=\Rh_N\pb{-N+14}+\Rh_N\pb{N+4}+\Rh_N\pb{N+3}.
    \end{align*}
If $N\geq14$, then $-N+14\leq0$, so $\Rh_N\p{-N+14}$ is undefined and the sequence dies, as required.

\end{proof}

Theorem~\ref{thm:1thruN} says that $\Rh_N$ dies for all but five values $N$.  The sequences $\Rh_5$ and $\Rh_6$ are identical; both are the $\Rh$-sequence~\cite{slowtrihof}. Sequences $B_7$, $B_8$, and $B_9$ are more akin to Hofstadter's Q sequence.  Like Hofstadter's, it is unclear whether any of these sequences die.  All last for at least $30$ million terms~\cite{oeis}*{A373227, A373228, A373229}.
Plots of the first hundred thousand terms of each of these sequences are shown in Figure~\ref{fig:b789}.

\begin{figure}
\begin{multicols}{3}
\includegraphics[width=2.3in]{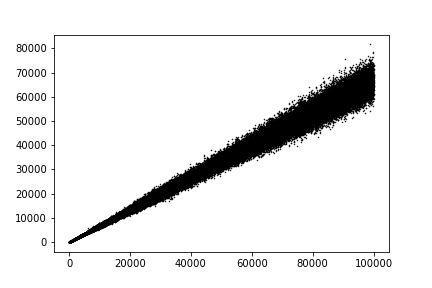}

\includegraphics[width=2.3in]{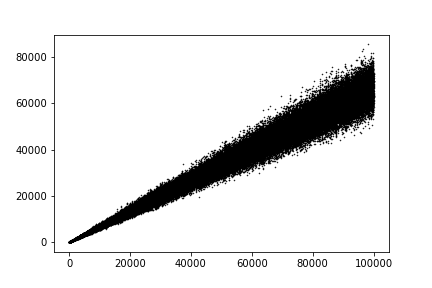}

\includegraphics[width=2.3in]{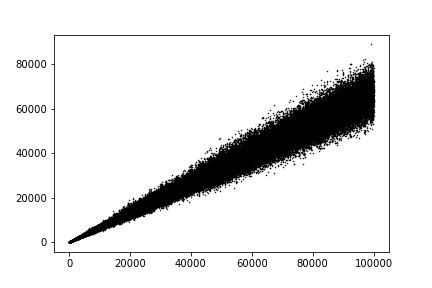}
\end{multicols}
\caption{Plots of the first 100,000 terms of $\Rh_7$ (left), $\Rh_8$ (center), and $\Rh_9$ (right)}
\label{fig:b789}
\end{figure}

\section{Linear Initial Conditions with Extra Zeroes}\label{s:sd}
The sequences in Section~\ref{s:wd} almost all die.  As in~\cite{hof1thruN}, we now consider what happens if we prevent them from dying quickly by defining their values to be zero at nonpositive integers.  For an integer $N\geq3$, let $\Rh_{\bar{N}}$ denote the sequence obtained from the $\Rh$-recurrence with initial conditions $\abk{\bar{0};1,2,3,\ldots,N}$.

In~\cite{hof1thruN}, it was seen that the corresponding behavior for the Hofstadter $Q$-recurrences depends on the congruence class of $N$ modulo~$5$. Three of those cases lead to the end of the sequence, one leads to a semi-predictable pattern that seems to go on forever, and the fifth case leads to a dependence on the congruence class of $N$ modulo~$25$ and, thereafter, potentially higher powers of $5$. For the sequences $\Rh_{\bar{N}}$, the dependence is instead on the congruence class of $N$ modulo~$7$. Here, all seven cases lead to the end of the sequence without needing to consider any cases involving higher powers of $7$. But, all cases require fairly large values of $N$ to be valid.
We have the following main result.

\begin{theorem}\label{thm:trihof1N}
Let $N\geq72$ be 
a 
natural number.  Then the following period-$7$ pattern begins at index $N+67$ in $\Rh_{\bar{N}}$, where $k$ denotes a positive integer:
\[
\begin{cases}
\Rh_{\bar{N}}\p{N+7k}=7k+2\\
\Rh_{\bar{N}}\p{N+7k+1}=N+7k+2\\
\Rh_{\bar{N}}\p{N+7k+2}=N+7k+4\\
\Rh_{\bar{N}}\p{N+7k+3}=7\\
\Rh_{\bar{N}}\p{N+7k+4}=2N+2k+45\\
\Rh_{\bar{N}}\p{N+7k+5}=2N+k-7\\
\Rh_{\bar{N}}\p{N+7k+6}=N-2.
\end{cases}
\]
This pattern lasts through index $2N+\nu$, where 
\[
\nu=\begin{cases}
	-1&\text{if }N\equiv0\pmod{7}\\
	-2&\text{if }N\equiv1\pmod{7}\\
	-2&\text{if }N\equiv2\pmod{7}\\
	-2&\text{if }N\equiv3\pmod{7}\\
	2&\text{if }N\equiv4\pmod{7}\\
	1&\text{if }N\equiv5\pmod{7}\\
	0&\text{if }N\equiv6\pmod{7}.
\end{cases}
\]
After this,
\begin{itemize}
\item If $N\equiv 0\pmod{7}$ and $N\geq196$, then $\Rh_{\bar{N}}$ ends after $2N+27$ terms.
\item If $N\equiv 1\pmod{7}$ and $N\geq2087$, then $\Rh_{\bar{N}}$ ends after $2N+254$ terms.
\item If $N\equiv 2\pmod{7}$ and $N\geq3201$, then $\Rh_{\bar{N}}$ ends after $2N+524$ terms.
\item If $N\equiv 3\pmod{7}$ and $N\geq4315$, then $\Rh_{\bar{N}}$ ends after $2N+560$ terms.
\item If $N\equiv 4\pmod{7}$ and $N\geq200$, then $\Rh_{\bar{N}}$ ends after $2N+20$ terms.
\item If $N\equiv 5\pmod{7}$ and $N\geq32478$, then $\Rh_{\bar{N}}$ ends after $2N+4547$ terms.
\item If $N\equiv 6\pmod{7}$ and $N\geq118$, then $\Rh_{\bar{N}}$ ends after $2N+9$ terms.
\end{itemize}
\end{theorem}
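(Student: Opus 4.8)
The plan is to treat $N$ as a symbolic parameter and compute the terms of $\Rh_{\bar{N}}$ one at a time as piecewise-linear expressions in $N$, exactly the technique used for the $Q$-recurrence in~\cite{hof1thruN}. The proof then splits into three phases: (i) a finite symbolic computation from the initial conditions up to the start of the period-$7$ regime; (ii) an induction on $k$ establishing the seven displayed formulas; and (iii) a residue-by-residue symbolic computation of the finite ``tail'' that follows once the pattern breaks down.

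For phase (i), I would compute $\Rh_{\bar{N}}(n)$ for $n=1,2,\ldots$ up through index $N+67$ and enough further indices to anchor the induction of phase (ii). Each new term is obtained by applying the recurrence and substituting the previously computed values; since the offsets $\Rh_{\bar{N}}(n-1)$, $\Rh_{\bar{N}}(n-2)$, $\Rh_{\bar{N}}(n-3)$ are linear in $N$, each argument $n-\Rh_{\bar{N}}(n-i)$ is linear in $N$ and lands either in $\{1,\ldots,N\}$ (where the value equals the index), in the nonpositive range (where the value is $0$ by the $\bar{0}$ convention), or at an index already computed. For each term one records the inequalities on $N$ that make the intended case the correct one; intersecting these over all of phase (i) produces the explicit threshold, and this is the origin of the hypothesis $N\geq72$ for the pattern.

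Phase (ii) is an induction on $k$. Granting the seven formulas for all smaller indices --- in particular for the block indexed by $k-1$ --- one applies the recurrence at each of $N+7k, N+7k+1, \ldots, N+7k+6$. For instance, at $n=N+7k$ one reads $\Rh_{\bar{N}}(n-1)=N-2$, $\Rh_{\bar{N}}(n-2)=2N+k-8$, and $\Rh_{\bar{N}}(n-3)=2N+2k+43$ off the $(k-1)$-block, whence $\Rh_{\bar{N}}(n)=\Rh_{\bar{N}}(7k+2)+\Rh_{\bar{N}}(-N+6k+8)+\Rh_{\bar{N}}(-N+5k-43)$; for $k$ in the relevant range the last two arguments are nonpositive and contribute $0$, while $7k+2$ still points into the initial block $\{1,\ldots,N\}$ and so equals $7k+2$, giving $\Rh_{\bar{N}}(n)=7k+2$. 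The remaining six indices of the block go through the same way. The pattern persists precisely as long as every such argument stays in its ``safe'' region; the first time one of these conditions fails --- e.g.\ when a small argument such as $7k+2$ overtakes the top of a known region, or when a previously nonpositive argument turns positive --- happens near $k\approx N/7$, that is, near index $2N$, and isolating the exact last valid index requires a short case analysis modulo $7$, which is what produces $\nu$.

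Phase (iii) is where most of the work lies. Once the period-$7$ regime ends at index $2N+\nu$, I would resume the term-by-term symbolic computation --- the referenced indices now fall into the growing tail itself, into the pattern region (whose values phase (ii) gives as explicit functions of $k$, hence of $N$), or into the zero region --- and continue until the recurrence calls for $\Rh_{\bar{N}}(m)$ with $m\leq0$ as an outermost argument $n-\Rh_{\bar{N}}(n-1)$, at which point the sequence ends. This must be carried out separately in each of the seven residue classes, and the tails have vastly different lengths (a couple dozen terms when $N\equiv6\pmod{7}$, several thousand when $N\equiv5\pmod{7}$). I expect the conceptual content to be slight and the computation to be largely mechanical, best done by computer; the genuine obstacle is managing the accumulating inequalities on $N$ across a tail of length $\approx4500$, and it is exactly this that forces the large, class-dependent thresholds, from $N\geq118$ up to $N\geq32478$. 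The one subtlety worth flagging throughout is confirming that no referenced index ever lands in the narrow ``gap'' between the initial block $\{1,\ldots,N\}$ and index $N+67$ in a manner not already settled in phase (i).
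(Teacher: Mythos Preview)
Your proposal is correct and follows essentially the same three-phase approach as the paper: a symbolic computation of the first $N+69$ terms (Appendices~\ref{app:Qwd} and~\ref{app:Qbwd}), an induction establishing the period-$7$ pattern, and a residue-by-residue symbolic tail computation (relegated to external files). The only organizational difference is that the paper abstracts phase~(ii) into a standalone Lemma~\ref{lem:7cyc} with general parameters $K,c,\gamma,\lambda,\mu$ and arbitrary filler values $a_1,\ldots,a_L$ at indices $K+2,\ldots,K+L+1$, which cleanly disposes of exactly the ``gap'' subtlety you flag at the end.
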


The proof of Theorem~\ref{thm:trihof1N} uses the following lemma.
\begin{lemma}\label{lem:7cyc}
Let $K\geq 7$, $c\geq1$ and $0\leq\Cs\leq 6$. Then, let $\As$ and $\Bs$ be integers such that
\[
\As\geq\begin{cases}
-2c+2&\text{if }\Cs=0\\
-2c+1&\text{if }\Cs=1\\
-2c+4&\text{if }\Cs=2\\
-2c+3&\text{if }\Cs=3\\
-2c+2&\text{if }\Cs=4\\
-2c+1&\text{if }\Cs=5\\
-2c&\text{if }\Cs=6
\end{cases}
\hspace{0.2in}\text{and}\hspace{0.2in}
\Bs\geq\begin{cases}
-c&\text{if }\Cs=0\\
-c&\text{if }\Cs=1\\
-c+3&\text{if }\Cs=2\\
-c+2&\text{if }\Cs=3\\
-c+1&\text{if }\Cs=4\\
-c&\text{if }\Cs=5\\
-c-1&\text{if }\Cs=6.
\end{cases}
\]
Define $L=K-7c-\Cs$ and $M=K+L+5$.
Then, for arbitrary integers $a_1,a_2,\ldots,a_L$, denote the sequence resulting from the $\Rh$-recurrence and the 
initial conditions
\[
\abk{\bar{0};1,2,\ldots,K,6,a_1,a_2,\ldots,a_L,2K+\As-2,2K+\Bs-1,K-2}
\]
 by $\Rh_C$.  
The sequence $\Rh_C$ follows the following pattern from $\Rh_C\p{M-3}$ through $B_C\p{2K+\nu}$
\[
\begin{cases}
\Rh_C\p{M+7k}=L+7k+7\\
\Rh_C\p{M+7k+1}=M+7k+2\\
\Rh_C\p{M+7k+2}=M+7k+4\\
\Rh_C\p{M+7k+3}=7\\
\Rh_C\p{M+7k+4}=2K+2k+\As\\
\Rh_C\p{M+7k+5}=2K+k+\Bs\\
\Rh_C\p{M+7k+6}=K-2
\end{cases}
\]
where
\[
\nu=\begin{cases}
-2&\text{if }\Cs=0\\
-2&\text{if }\Cs=1\\
2&\text{if }\Cs=2\\
1&\text{if }\Cs=3\\
0&\text{if }\Cs=4\\
-1&\text{if }\Cs=5\\
-2&\text{if }\Cs=6.
\end{cases}
\]
\end{lemma}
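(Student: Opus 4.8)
The plan is to prove the stated period-$7$ pattern by induction on $k$, exactly in the spirit of the symbolic computation used in~\cite{hof1thruN}. First I would unwind the initial conditions to compute the terms $\Rh_C(M-3)$ through $\Rh_C(M+6)$ explicitly, treating $K$, $L$, $c$, $\As$, $\Bs$, and $\Cs$ as symbolic parameters. This establishes the base case $k=0$ (well, the pattern as written starts at $\Rh_C(M-3)$, so I would verify $\Rh_C(M-3)=L+4$, $\Rh_C(M-2)=M-1$, $\Rh_C(M-1)=M+1$, $\Rh_C(M)=7$, $\Rh_C(M+1)=2K+\As$, $\Rh_C(M+2)=2K+\Bs$, $\Rh_C(M+3)=K-2$, and then the $k=1$ block). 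The key observation that makes these computations go through is that the index shifts $n-\Rh_C(n-1)$, $n-\Rh_C(n-2)$, $n-\Rh_C(n-3)$ must land either in the prescribed pattern (for smaller $k$), in the run $a_1,\dots,a_L$ (contributing a known small value once we know what $a_i$ it hits — actually these indices should land on the linear part $1,2,\dots,K$ or produce constants), or on the tail triple $2K+\As-2$, $2K+\Bs-1$, $K-2$; the lower bounds on $\As$ and $\Bs$ in terms of $c$ and $\Cs$ are precisely what guarantees these indices stay positive and in range.

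The inductive step is the heart of the argument. Assuming the pattern holds through $\Rh_C(M+7k+6)$, I would compute $\Rh_C(M+7(k+1)+j)$ for $j=0,1,\dots,6$ using the recurrence. For each $j$, I substitute the three previously computed values $\Rh_C(M+7k+6+j)$, $\Rh_C(M+7k+5+j)$, $\Rh_C(M+7k+4+j)$ into the shifts, simplify $n-\Rh_C(n-1)$ etc., and check that each resulting index falls into a region where the value is already known — either an earlier pattern term of the form $L+7k'+7$, $M+7k'+2$, $M+7k'+4$, $7$, $2K+2k'+\As$, $2K+k'+\Bs$, or $K-2$, or a constant, or (crucially) deep enough that it lands in the linear initial segment $1,2,\dots,K$ giving value equal to the index. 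Summing the three contributions should reproduce the next pattern term; e.g. for the term $\Rh_C(M+7k+1)=M+7k+2$, one shift of size $\approx M$ jumps back near the start of the pattern, the other two are small and land on constants $7$, $2$, etc. I would record, for each of the seven cases, exactly which prior terms are invoked, and extract from "these indices must be $\geq 1$ and $\leq$ the current length" the inequality on $k$ that bounds how long the pattern persists; matching the largest valid $k$ against $2K+\nu$ yields the stated $\nu$ for each residue $\Cs\bmod 7$ (the seven cases arise because the pattern terminates mid-block, and where in the block it terminates depends on $L\equiv K-\Cs\pmod 7$).

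The main obstacle I anticipate is bookkeeping, not conceptual depth: there are $7$ inductive sub-cases, each requiring three nested recurrence evaluations, and within each the three index-shifts must be separately located among half a dozen possible "zones," with the zone boundaries depending on $k$, $c$, $\Cs$, $L$, and the sign conditions on $\As,\Bs$. Getting the zone analysis airtight — in particular verifying that no shift accidentally lands on a nonpositive index (which would end the sequence prematurely) before $k$ reaches its claimed maximum, and that none lands in the opaque block $a_1,\dots,a_L$ in a way that would make the value genuinely depend on the arbitrary $a_i$ — is where the stated lower bounds on $\As$, $\Bs$, and $K\geq 7$ must be used with care. Since the lemma is stated with $a_1,\dots,a_L$ arbitrary, a sanity check throughout is that every shift that reaches indices in $[K+2,\,K+1+L]$ must be re-expressed via the recurrence until it escapes that window; I expect this is why the pattern only "begins" at $M-3=K+L+2$ rather than immediately after the given initial data. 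I would organize the writeup as a single table per residue class, which keeps the routine algebra contained and makes the validity bounds transparent.
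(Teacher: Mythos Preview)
Your overall plan---induction with seven residue cases---is exactly what the paper does, but two points of your proposal reveal a misunderstanding that would derail the execution.

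First, you have the base case misaligned.  The last three initial conditions sit at indices $M-3,M-2,M-1$, and they match the pattern with $k=-1$ in residues $4,5,6$: that is, $\Rh_C(M-3)=2K+\As-2$, $\Rh_C(M-2)=2K+\Bs-1$, $\Rh_C(M-1)=K-2$.  Your claimed values $\Rh_C(M-3)=L+4$, $\Rh_C(M-2)=M-1$, etc.\ are shifted by three positions and do not match the initial data; the paper simply takes these three given values as the entire base case and then inducts on the index $n$ (not on $k$) from $n=M$ onward.

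Second, and more seriously, you have the role of the $\bar 0$ convention and the lower bounds on $\As,\Bs$ backwards.  You write that those bounds ``guarantee these indices stay positive'' and worry that a nonpositive index would ``end the sequence prematurely.''  In fact the opposite is the engine of the proof: under $\bar 0$ a nonpositive index is harmless and contributes $0$, and the lower bounds on $\As$ and $\Bs$ are exactly what force several of the shifted indices (e.g.\ $-K+L+6k+6-\Bs$ and $-K+L+5k+7-\As$) to be $\le 0$, so that those summands vanish.  The seven-case verification consists largely of checking, for each residue, that (i) one or two shifts land in $\{1,\dots,K\}$ or at $K+1$ and return a known value, (ii) the remaining shifts are $\le 0$ and return $0$, and (iii) no shift lands in the opaque block $a_1,\dots,a_L$.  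You never ``re-express via the recurrence'' to escape that block---the $a_i$ are initial conditions, not computed terms---so the argument must show directly that those indices are never hit; this is where the upper bound $n\le 2K+\nu$ and the case split on $\Cs$ do their work.
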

\begin{proof}
The proof is by induction on the index.  The base cases are $B_C\p{M-3}$ through $B_C\p{M-1}$, which are part of the initial conditions.  Now, suppose $M\leq n\leq2K+\nu$, 
and suppose that $\Rh_C\p{n'}$ is what we want it to be for all $M-3\leq n'<n$.  
 
There are seven cases to consider:
\begin{description}
\item[$n-M\equiv0\pmod{7}$:] In this case, $n=M+7k$ for some $k$.  Applying the $\Rh$-recurrence, we have
\begin{align*}
\Rh_C\p{M+7k}&=\Rh_C\p{M+7k-\Rh_C\p{M+7k-1}}\\
&\phantom{=}+\Rh_C\p{M+7k-\Rh_C\p{M+7k-2}}\\
&\phantom{=}+\Rh_C\p{M+7k-\Rh_C\p{M+7k-3}}\\
&=\Rh_C\p{M+7k-\pb{K-2}}+\Rh_C\p{M+7k-\pb{2K+k-1+\Bs}}\\
&\phantom{=}+\Rh_C\p{M+7k-\pb{2K+2k-2+\As}}\\
&=\Rh_C\p{L+7k+7}+\Rh_C\p{-K+L+6k+6-\Bs}\\
&\phantom{=}+\Rh_C\p{-K+L+5k+7-\As}.
\end{align*}
We know that $n\leq 2K+\nu$. Since $n=M+7k$, we have $n-M=7k\leq2K+\nu-M=K-L-5+\nu=7c+\Cs-5+\nu$. Observe that
\[
\Cs-5+\nu=\begin{cases}
-7 & \Cs=0\\
-6 & \Cs=1\\
-1 & \text{otherwise.}
\end{cases}
\]
Since $7k\equiv0\pmod{7}$, we actually have $7k\leq 7c-7$, since $\fl{\frac{\Cs-5+\nu}{7}}$ always equals $-1$.
In particular, this means that $L+7k+7\leq L+7c=K-\Cs\leq K$. 
As a result, $\Rh_C\p{L+7k+7}=L+7k+7$.

We also have
\[
-K+L+6k+6-\Bs=-7c-\Cs+6k+6-\Bs\leq-7c-\Cs+\frac{6}{7}\p{7c-7}+6-\Bs=-c-\Cs-\Bs.
\]
Observe that
\[
-c-\Cs-\Bs=\begin{cases}
0&\Cs=0\\
-1&\Cs=1\\
-5&\text{otherwise,}
\end{cases}
\]
which implies that $-c-\Cs-\Bs\leq0$. In turn, this means that $-K+L+6k+6-\Bs\leq0$, implying that $\Rh_C\p{-K+L+6k+6-\Bs}=0$.

Similarly, we have that
\[
-K+L+5k+7-\As=-7c-\Cs+5k+7-\As\leq-7c-\Cs+\frac{5}{7}\p{7c-7}+7-\As=-2c-\Cs+2-\As.
\]
Observe that
\[
-2c-\Cs+2-\As=\begin{cases}
0&\Cs=0\text{ or }\Cs=1\\
-4&\text{otherwise,}
\end{cases}
\]
which implies that $-2c-\Cs+2-\As\leq0$. In turn, this means that $-K+L+5k+7-\As\leq0$, implying that $\Rh_C\p{-K+L+5k+7-\As}=0$.
So, $\Rh_C\p{M+7k}=L+7k+7$, as required.

\item[$n-M\equiv1\pmod{7}$:] In this case, $n=M+7k+1$ for some $k$.  Applying the $\Rh$-recurrence, we have
\begin{align*}
\Rh_C\p{M+7k+1}&=\Rh_C\p{M+7k+1-\Rh_C\p{M+7k}}\\
&\phantom{=}+\Rh_C\p{M+7k+1-\Rh_C\p{M+7k-1}}\\
&\phantom{=}+\Rh_C\p{M+7k+1-\Rh_C\p{M+7k-2}}\\
&=\Rh_C\p{M+7k+1-\pb{L+7k+7}}+\Rh_C\p{M+7k+1-\pb{K-2}}\\
&\phantom{=}+\Rh_C\p{M+7k+1-\pb{2K+k-1+\Bs}}\\
&=\Rh_C\p{K-1}+\Rh_C\p{L+7k+8}+\Rh_C\p{-K+L+6k+7-\Bs}.
\end{align*}
We know that $\Rh_C\p{K-1}=K-1$. 
We also know that $n\leq 2K+\nu$. Since $n=M+7k+1$, we have $n-M-1=7k\leq2K+\nu-M-1=K-L-6+\nu=7c+\Cs-6+\nu$. Observe that
\[
\Cs-6+\nu=\begin{cases}
-8 & \Cs=0\\
-7 & \Cs=1\\
-2 & \text{otherwise.}
\end{cases}
\]
Since $7k\equiv0\pmod{7}$, we actually have
\[
7k\leq
\begin{cases}
7c-14 & \Cs=0\\
7c-7 & \text{otherwise.}
\end{cases}
\]
In particular, this means that if $\Cs=0$ then $L+7k+8\leq L+7c-6=K-6<K$, and if $\Cs\neq0$ then $L+7k+8\leq L+7c+1=K-\Cs+1\leq K$. 
As a result, $\Rh_C\p{L+7k+8}=L+7k+8$.

We also have $-K+L+6k+7-\Bs=-7c-\Cs+6k+7-\Bs$.
When $\Cs=0$, $-7c-\Cs+6k+7-\Bs\leq-7c-\Cs+\frac{6}{7}\p{7c-14}+7-\Bs=-c-\Cs-5-\Bs=-5<0.$ When $\Cs\neq0$,
$-7c-\Cs+6k+7-\Bs\leq-7c-\Cs+\frac{6}{7}\p{7c-7}+7-\Bs=-c-\Cs+1-\Bs.$
Observe that
\[
-c-\Cs+1-\Bs=\begin{cases}
1&\Cs=0\\
0&\Cs=1\\
-4&\text{otherwise.}
\end{cases}
\]
Putting all cases  together imply that $-c-\Cs+1-\Bs\leq0$ when $\Cs\neq0$. In turn, this all means that $-K+L+6k+7-\Bs\leq0$, implying that $\Rh_C\p{-K+L+6k+7-\Bs}=0$.
So, $\Rh_C\p{M+7k+1}=\pb{K-1}+\pb{L+7k+8}=K+L+7k+7=M+7k+2$, as required.
\item[$n-M\equiv2\pmod{7}$:] In this case, $n=M+7k+2$ for some $k$.  Applying the $\Rh$-recurrence, we have
\begin{align*}
\Rh_C\p{M+7k+2}&=\Rh_C\p{M+7k+2-\Rh_C\p{M+7k+1}}\\
&\phantom{=}+\Rh_C\p{M+7k+2-\Rh_C\p{M+7k}}\\
&\phantom{=}+\Rh_C\p{M+7k+2-\Rh_C\p{M+7k-1}}\\
&=\Rh_C\p{M+7k+2-\pb{M+7k+2}}\\
&\phantom{=}+\Rh_C\p{M+7k+2-\pb{L+7k+7}}+\Rh_C\p{M+7k+2-\pb{K-2}}\\
&=\Rh_C\p{0}+\Rh_C\p{K}+\Rh_C\p{L+7k+9}.
\end{align*}
We know that $\Rh_C\p{0}=0$ and that $\Rh_C\p{K}=K$. 
We also know that $n\leq 2K+\nu$. 
Since $n=M+7k+2$, we have $n-M-2=7k\leq2K+\nu-M-2=K-L-7+\nu=7c+\Cs-7+\nu$. Observe that
\[
\Cs-7+\nu=\begin{cases}
-9 & \Cs=0\\
-8 & \Cs=1\\
-3 & \text{otherwise.}
\end{cases}
\]
Since $7k\equiv0\pmod{7}$, we actually have
\[
7k\leq
\begin{cases}
7c-14 & \Cs=0\text{ or }\Cs=1\\
7c-7 & \text{otherwise.}
\end{cases}
\]
In particular, this means that if $\Cs\leq1$ then $L+7k+9\leq L+7c-4=K-\Cs-4<K$, and if $\Cs\geq2$ then $L+7k+9\leq L+7c+2=K-\Cs+2\leq K$. 
As a result, $\Rh_C\p{L+7k+9}=L+7k+9$.
So, $\Rh_C\p{M+7k+2}=K+L+7k+9=M+7k+4$, as required.
\item[$n-M\equiv3\pmod{7}$:] In this case, $n=M+7k+3$ for some $k$.  Applying the $\Rh$-recurrence, we have
\begin{align*}
\Rh_C\p{M+7k+3}&=\Rh_C\p{M+7k+3-\Rh_C\p{M+7k+2}}\\
&\phantom{=}+\Rh_C\p{M+7k+3-\Rh_C\p{M+7k+1}}\\
&\phantom{=}+\Rh_C\p{M+7k+3-\Rh_C\p{M+7k}}\\
&=\Rh_C\p{M+7k+3-\pb{M+7k+4}}\\
&\phantom{=}+\Rh_C\p{M+7k+3-\pb{M+7k+2}}\\
&\phantom{=}+\Rh_C\p{M+7k+3-\pb{L+7k+7}}\\
&=\Rh_C\p{-1}+\Rh_C\p{1}+\Rh_C\p{K+1}=0+1+6=7,
\end{align*}
as required.
\item[$n-M\equiv4\pmod{7}$:] In this case, $n=M+7k+4$ for some $k$.  Applying the $\Rh$-recurrence, we have
\begin{align*}
\Rh_C\p{M+7k+4}&=\Rh_C\p{M+7k+4-\Rh_C\p{M+7k+3}}\\
&\phantom{=}+\Rh_C\p{M+7k+4-\Rh_C\p{M+7k+2}}\\
&\phantom{=}+\Rh_C\p{M+7k+4-\Rh_C\p{M+7k+1}}\\
&=\Rh_C\p{M+7k+4-7}+\Rh_C\p{M+7k+4-\pb{M+7k+4}}\\
&\phantom{=}+\Rh_C\p{M+7k+4-\pb{M+7k+2}}\\
&=\Rh_C\p{M+7k-3}+\Rh_C\p{0}+\Rh_C\p{2}.
\end{align*}
We know that $\Rh_C\p{0}=0$ and that $\Rh_C\p{2}=2$. By induction, we have $\Rh_C\p{M+7k-3}=2K+2k+\As-2$. So, $\Rh_C\p{M+7k+4}=2K+2k+\As$, as required.
\item[$n-M\equiv5\pmod{7}$:] In this case, $n=M+7k+5$ for some $k$.  Applying the $\Rh$-recurrence, we have
\begin{align*}
\Rh_C\p{M+7k+5}&=\Rh_C\p{M+7k+5-\Rh_C\p{M+7k+4}}\\
&\phantom{=}+\Rh_C\p{M+7k+5-\Rh_C\p{M+7k+3}}\\
&\phantom{=}+\Rh_C\p{M+7k+5-\Rh_C\p{M+7k+2}}\\
&=\Rh_C\p{M+7k+5-\pb{2K+2k+\As}}+\Rh_C\p{M+7k+5-7}\\
&\phantom{=}+\Rh_C\p{M+7k+5-\pb{M+7k+4}}\\
&=\Rh_C\p{-K+L+5k+10-\As}+\Rh_C\p{M+7k-2}+\Rh_C\p{1}.
\end{align*}
We know that $\Rh_C\p{1}=1$. By induction, we have $\Rh_C\p{M+7k-2}=2K+k+\Bs-1$. 
We also know that $n\leq 2K+\nu$. 
Since $n=M+7k+5$, we have $n-M-5=7k\leq2K+\nu-M-5=K-L-10+\nu=7c+\Cs-10+\nu$. Observe that
\[
\Cs-10+\nu=\begin{cases}
-12 & \Cs=0\\
-11 & \Cs=1\\
-6 & \text{otherwise.}
\end{cases}
\]
Since $7k\equiv0\pmod{7}$, we actually have
\[
7k\leq
\begin{cases}
7c-14 & \Cs=0\text{ or }\Cs=1\\
7c-7 & \text{otherwise.}
\end{cases}
\]
From here, we have $-K+L+5k+10-\As=-7c-\Cs+5k+10-\As$.
When $\Cs\leq1$, $-7c-\Cs+5k+10-\As\leq-7c-\Cs+\frac{5}{7}\p{7c-14}+10-\As=-2c-\Cs-\As<0.$ When $\Cs\geq2$,
$-7c-\Cs+5k+10-\As\leq-7c-\Cs+\frac{5}{7}\p{7c-7}+10-\As=-2c-\Cs+5-\As.$
Observe that $-2c-\Cs+5-\As=-1$ when $\Cs\geq2$, which is less than $0$. In turn, this all means that $-K+L+5k+10-\As\leq0$ regardless of value of $\Cs$, implying that $\Rh_C\p{-K+L+5k+10-\As}=0$. 
Therefore $\Rh_C\p{M+7k+5}=2K+k+\Bs$, as required.
\item[$n-M\equiv6\pmod{7}$:] In this case, $n=M+7k+6$ for some $k$.  Applying the $\Rh$-recurrence, we have
\begin{align*}
\Rh_C\p{M+7k+6}&=\Rh_C\p{M+7k+6-\Rh_C\p{M+7k+5}}\\
&\phantom{=}+\Rh_C\p{M+7k+6-\Rh_C\p{M+7k+4}}\\
&\phantom{=}+\Rh_C\p{M+7k+6-\Rh_C\p{M+7k+3}}\\
&=\Rh_C\p{M+7k+6-\pb{2K+k+\Bs}}\\
&\phantom{=}+\Rh_C\p{M+7k+6-\pb{2K+2k+\As}}+\Rh_C\p{M+7k+6-7}\\
&=\Rh_C\p{-K+L+6k+11-\Bs}+\Rh_C\p{-K+L+5k+11-\As}\\
&\phantom{=}+\Rh_C\p{M+7k-1}.
\end{align*}
By induction, we have $\Rh_C\p{M+7k-1}=K-2$. 
We also know that $n\leq 2K+\nu$. 
Since $n=M+7k+6$, we have $n-M-6=7k\leq2K+\nu-M-6=K-L-11+\nu=7c+\Cs-11+\nu$. Observe that
\[
\Cs-11+\nu=\begin{cases}
-13 & \Cs=0\\
-12 & \Cs=1\\
-7 & \text{otherwise.}
\end{cases}
\]
Since $7k\equiv0\pmod{7}$, we actually have
\[
7k\leq
\begin{cases}
7c-14 & \Cs=0\text{ or }\Cs=1\\
7c-7 & \text{otherwise.}
\end{cases}
\]
From here, we have $-K+L+6k+11-\Bs=-7c-\Cs+6k+11-\Bs$.
When $\Cs\leq1$, $-7c-\Cs+6k+11-\Bs\leq-7c-\Cs+\frac{6}{7}\p{7c-14}+11-\Bs=-c-\Cs-1-\Bs<0.$ When $\Cs\geq2$,
$-7c-\Cs+6k+11-\Bs\leq-7c-\Cs+\frac{6}{7}\p{7c-7}+11-\Bs=-c-\Cs+5-\Bs.$
Observe that $-c-\Cs+5-\Bs=0$ when $\Cs\geq2$. In turn, this all means that $-K+L+6k+11-\Bs\leq0$ regardless of value of $\Cs$, implying that $\Rh_C\p{-K+L+6k+11-\Bs}=0$.
Similarly, we have $-K+L+5k+11-\As=-7c-\Cs+5k+11-\As$.
When $\Cs\leq1$, $-7c-\Cs+5k+11-\As\leq-7c-\Cs+\frac{5}{7}\p{7c-14}+11-\As=-2c-\Cs+1-\As<0.$ When $\Cs\geq2$,
$-7c-\Cs+5k+11-\As\leq-7c-\Cs+\frac{5}{7}\p{7c-7}+11-\As=-2c-\Cs+6-\As.$
Observe that $-2c-\Cs+6-\As=0$ when $\Cs\geq2$. In turn, this all means that $-K+L+5k+11-\As\leq0$ regardless of value of $\Cs$, implying that $\Rh_C\p{-K+L+5k+11-\As}=0$.
Therefore, $\Rh_C\p{M+7k+6}=K-2$, as required.
\end{description}
\end{proof}

We now prove Theorem~\ref{thm:trihof1N}.
\begin{proof}
We refer the reader to Appendix~\ref{app:Qwd} for terms $\Rh_{\bar N}\p{1}$ through $\Rh_{\bar N}\p{N+28}$.  Those calculations, which are for $\Rh_N$, also apply to $\Rh_{\bar N}$.  From there, Appendix~\ref{app:Qbwd} calculates and lists terms $\Rh_{\bar N}\p{N+25}$ through $\Rh_{\bar N}\p{N+69}$. Those calculations are all valid provided $N\geq67$. 
Observe that the values $K=N$, $c=\fl{\frac{N-65}{7}}$, $\Cs=\pb{N-65}\bmod 7$, $\As=65$, and $\Bs=3$ satisfy the conditions of Lemma~\ref{lem:7cyc} provided that $N\geq72$ (so that $c\geq1$), and the first $N+69$ terms of $\Rh_{\bar N}$ can be used as initial conditions as per that lemma. Keeping in mind that these choices of parameters mean that $L=65$ and $M=N+70$, Lemma~\ref{lem:7cyc} implies we have the following pattern for $B_{\bar N}$ for some time:
\[
\begin{cases}
\Rh_{\bar N}\p{N+70+7k'}=65+7k'+7=7k'+72\\
\Rh_{\bar N}\p{N+70+7k'+1}=N+70+7k'+2=N+7k'+72\\
\Rh_{\bar N}\p{N+70+7k'+2}=N+70+7k'+4=N+7k'+74\\
\Rh_{\bar N}\p{N+70+7k'+3}=7\\
\Rh_{\bar N}\p{N+70+7k'+4}=2N+2k'+65\\
\Rh_{\bar N}\p{N+70+7k'+5}=2N+k'+3\\
\Rh_{\bar N}\p{N+70+7k'+6}=N-2.
\end{cases}
\]
Re-indexing so that $k=k'+10$ allows us to rewrite this pattern as
\[
\begin{cases}
\Rh_{\bar N}\p{N+7k}=7\pb{k-10}+72=7k+2\\
\Rh_{\bar N}\p{N+7k+1}=N+7\pb{k-10}+72=N+7k+2\\
\Rh_{\bar N}\p{N+7k+2}=N+7\pb{k-10}+74=N+7k+4\\
\Rh_{\bar N}\p{N+7k+3}=7\\
\Rh_{\bar N}\p{N+7k+4}=2N+2\pb{k-10}+65=2N+2k+45\\
\Rh_{\bar N}\p{N+7k+5}=2N+\pb{k-10}+3=2N+k-7\\
\Rh_{\bar N}\p{N+7k+6}=N-2,
\end{cases}
\]
which is the desired pattern. Lemma~\ref{lem:7cyc} guarantees that this pattern persists through index $2N+\nu$, where $\nu$ depends on $\Cs$, which, in turn, depends on $N$. Here, we have
\[
\nu=\begin{cases}
-1 & \text{if }N\equiv 0\pmod{7}\\
-2 & \text{if }N\equiv 1\pmod{7}\\
-2 & \text{if }N\equiv 2\pmod{7}\\
-2 & \text{if }N\equiv 3\pmod{7}\\
2 & \text{if }N\equiv 4\pmod{7}\\
1 & \text{if }N\equiv 5\pmod{7}\\
0 & \text{if }N\equiv 6\pmod{7},
\end{cases}
\]
as required.

We now prove the remainder of Theorem~\ref{thm:trihof1N}, regarding the ways in which $B_{\bar N}$ can end.  For each possibility of $N\bmod 7$, we can compute terms of $B_{\bar N}$ from index $2N+\nu+1$ onward, using the now known values of $B_{\bar N}$ for all smaller indices. These computations are akin to those in the appendices, and like those we track how large $N$ needs to be for the computations to be valid. The result is a lengthy and tedious list of terms and bounds, but the claimed end conditions in the statement of the theorem are all validated. The full length of the computations can be found on \href{https://github.com/nhf216/B-recurrence-data}{GitHub}\footnote{\url{https://github.com/nhf216/B-recurrence-data}}, with one file for each value of $N\bmod 7$.
\end{proof}

The sequences corresponding to the minimum values of $N$ for each congruence class in Theorem~\ref{thm:trihof1N} are all available in OEIS~\cite{oeis}*{A373234, A373235, A373236, A373237, A373238, A274058, A373239}.


\subsection{The Remaining Values of $N$}\label{ss:sporadic}

Theorem~\ref{thm:trihof1N} characterizes the behavior of $B_{\bar N}$ for all but $6079$ values of $N\geq3$. 
These sequences can be studied individually by generating the sequences and observing the terms.  This study is carried out in~\cite{thesis}; what follows is a summary of those findings.  All of these sequences end before $150$ million terms except when
\begin{align*}
N\in\{4,&5,6,7,8,9,10,11,12,13,14,15,18,81,182,193,429,822,1892,2789,3442,7292,\\
&20830,23511,25163\}.
\end{align*}
 Of these $B_{\bar 5}$ and $B_{\bar 6}$ are the $B$-sequence, so they last forever. For
 \[
 N\in\st{4, 7, 8, 9, 10, 11, 12, 13, 14, 15, 18},
 \]
 $B_{\bar N}$ appears to last forever but exhibits chaotic behavior, akin to the sequences in Figure~\ref{fig:b789}. To discuss the remaining values, we need four more results like Lemma~\ref{lem:7cyc}. All come from~\cite{thesis} and are stated without proof. All proofs are straightforward but tedious inductive arguments, much like the proof of Lemma~\ref{lem:7cyc}. Here, unlike in Lemma~\ref{lem:7cyc}, the bounds on the parameters are generally not optimized.

\begin{lemma}\label{lem:2cyc}
Let $K\geq 1$ and $M\geq K+5$ be integers.
Then, for arbitrary integers $a_1,a_2,\ldots,a_K$, denote the sequence resulting from the $\Rh$-recurrence and the 
initial conditions
\[
\abk{\bar{0};a_1,a_2,\ldots,a_K,2,M,2}
\]
 by $\Rh_D$. 
The sequence $\Rh_D$ follows the following pattern starting from $\Rh_D\p{K+1}$ (and lasting forever):
\[
\begin{cases}
\Rh_D\p{K+2k}=2^{k-1}\cdot M\\
\Rh_D\p{K+2k+1}=2.
\end{cases}
\]
\end{lemma}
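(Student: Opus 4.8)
The plan is to prove Lemma~\ref{lem:2cyc} by induction on the index, exactly mirroring the structure of the proof of Lemma~\ref{lem:7cyc}. The base cases are the entries $\Rh_D\p{K+1}=2$, $\Rh_D\p{K+2}=M$, and $\Rh_D\p{K+3}=2$, which are part of the stated initial conditions $\abk{\bar 0;a_1,\ldots,a_K,2,M,2}$; writing $k=1$, these are $\Rh_D\p{K+2\cdot1}=2^{0}\cdot M=M$ and $\Rh_D\p{K+2\cdot1+1}=2$, and we also note $\Rh_D\p{K+1}=2$ for use below. For the inductive step, fix $n>K+3$, assume the claimed formula holds for all indices strictly between $K+1$ and $n$, and split into two cases according to the parity of $n-K$.

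First I would handle the case $n-K\equiv 0\pmod 2$, so $n=K+2k$ for some $k\geq2$. Applying the $\Rh$-recurrence,
\[
\Rh_D\p{K+2k}=\Rh_D\pb{K+2k-\Rh_D\p{K+2k-1}}+\Rh_D\pb{K+2k-\Rh_D\p{K+2k-2}}+\Rh_D\pb{K+2k-\Rh_D\p{K+2k-3}}.
\]
By the inductive hypothesis, $\Rh_D\p{K+2k-1}=\Rh_D\p{K+2(k-1)+1}=2$, $\Rh_D\p{K+2k-2}=\Rh_D\p{K+2(k-1)}=2^{k-2}M$, and $\Rh_D\p{K+2k-3}=\Rh_D\p{K+2(k-2)+1}=2$ (for $k=2$ the last of these is the base case $\Rh_D\p{K+1}=2$). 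Substituting gives
\[
\Rh_D\p{K+2k}=\Rh_D\p{K+2k-2}+\Rh_D\pb{K+2k-2^{k-2}M}+\Rh_D\p{K+2k-2}=2^{k-1}M+\Rh_D\pb{K+2k-2^{k-2}M}.
\]
Since $M\geq K+5\geq 6$ and $k\geq 2$, the argument $K+2k-2^{k-2}M$ is at most $K+2k-M\leq 2k-5$, and I would check that this is strictly less than $K+2k-3$ while also being $\leq 0$ in the relevant regime, so the $\bar 0$-convention forces $\Rh_D\pb{K+2k-2^{k-2}M}=0$; this yields $\Rh_D\p{K+2k}=2^{k-1}M$ as required. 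Second I would handle $n-K\equiv 1\pmod 2$, so $n=K+2k+1$ with $k\geq 2$; expanding the recurrence and using $\Rh_D\p{K+2k}=2^{k-1}M$ (just proved, or the base case), $\Rh_D\p{K+2k-1}=2$, $\Rh_D\p{K+2k-2}=2^{k-2}M$, one of the three summands becomes $\Rh_D\p{K+2k+1-2^{k-1}M}$, which is nonpositive and hence $0$, another becomes $\Rh_D\p{K+2k+1-2}=\Rh_D\p{K+2k-1}=2$, and the third becomes $\Rh_D\p{K+2k+1-2^{k-2}M}$, again nonpositive and hence $0$; summing gives $\Rh_D\p{K+2k+1}=2$, completing the induction.

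The main obstacle, as in Lemma~\ref{lem:7cyc}, is entirely bookkeeping: one must verify that every ``shifted'' argument of the form $K+2k-c\cdot2^{j}M$ is genuinely nonpositive (so that the $\bar 0$-convention applies and the sequence does not instead die), and that no shifted argument accidentally lands in the range $[K+1,n-1]$ where the inductive hypothesis would give a nonzero value, nor in the initial segment $a_1,\ldots,a_K$ whose values are arbitrary. Because $M\geq K+5$ and the powers $2^{k-2}$ grow, the quantity $2^{k-2}M$ dominates $2k$ for all $k\geq 2$, so each such argument is at most $K+2k-M<2k-5\leq 0$ once $k\leq 2$ and strictly more negative thereafter — I would record this monotonicity observation once and invoke it in both cases rather than re-deriving it. A minor edge case worth isolating explicitly is $k=2$, where several of the lagged indices coincide with base cases rather than with earlier inductive values; handling $k=2$ separately (or simply noting that the base cases extend the pattern back to index $K+1$) removes any ambiguity. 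No appeal to the congruence structure modulo a prime is needed here, so the argument is genuinely shorter than that of Lemma~\ref{lem:7cyc}.
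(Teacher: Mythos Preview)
Your inductive scheme is exactly what the paper intends: it explicitly states Lemma~\ref{lem:2cyc} without proof and remarks that the argument is a routine induction in the style of Lemma~\ref{lem:7cyc}, so there is no ``paper's proof'' to compare against beyond that.

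The one genuine gap is your nonpositivity check. You bound the shifted argument by
\[
K+2k-2^{k-2}M\;\leq\;K+2k-M\;\leq\;2k-5,
\]
using only $2^{k-2}\geq1$. But $2k-5>0$ as soon as $k\geq3$, so this chain does \emph{not} show the index is $\leq0$; in particular it does not rule out landing in the arbitrary block $a_1,\ldots,a_K$. Your later sentence ``$2k-5\leq0$ once $k\leq2$ and strictly more negative thereafter'' is backwards, since $2k-5$ is increasing in $k$. What actually decreases is the full expression $g(k)=K+2k-2^{k-2}M$: one has
\[
g(k+1)-g(k)=2-2^{k-2}M\leq 2-M\leq 2-(K+5)<0,
\]
and $g(2)=K+4-M\leq-1$, so $g(k)\leq-1$ for all $k\geq2$. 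The same monotonicity gives $K+2k+1-2^{k-2}M\leq0$ (with equality possible at $k=2$, $M=K+5$, which is fine since $\Rh_D(0)=0$) and, a fortiori, $K+2k+1-2^{k-1}M<0$. Replace your weak bound with this decreasing-sequence argument and both cases of the induction go through cleanly; everything else in your sketch is correct.
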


\begin{lemma}\label{lem:5cyc}
Let $K\geq 3$, and $\Bs\geq1$ be integers.
Then, for arbitrary integers $a_4,a_5,\ldots,a_K$, denote the sequence resulting from the $\Rh$-recurrence and the 
initial conditions
\[
\abk{\bar{0};1,2,3,a_4,a_5,\ldots,a_K,K+\Bs,3,K+3,K+\Bs+1,5}
\]
 by $\Rh_E$.  
The sequence $\Rh_E$ follows the following pattern from $\Rh_E\p{K+1}$ through $\Rh_E\!\pb{K+\fl{\frac{5\Bs-15}{2}}}$:
\[
\begin{cases}
\Rh_E\p{K+5k}=5\\
\Rh_E\p{K+5k+1}=K+3k+\Bs\\
\Rh_E\p{K+5k+2}=3\\
\Rh_E\p{K+5k+3}=K+5k+3\\
\Rh_E\p{K+5k+4}=K+3k+\Bs+1.
\end{cases}
\]
\end{lemma}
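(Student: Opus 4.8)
\textbf{Proof proposal for Lemma~\ref{lem:5cyc}.}

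The plan is to prove the displayed pattern by strong induction on the index $n$, following the same template as the proof of Lemma~\ref{lem:7cyc}. The base cases will be $\Rh_E\p{K+1}$ through $\Rh_E\p{K+5}$ (possibly a few more, to cover all residues mod~$5$ that appear at the start of the range); these are read off directly from the initial conditions $K+\Bs,3,K+3,K+\Bs+1,5$ — in particular $\Rh_E\p{K+1}=K+\Bs$, $\Rh_E\p{K+2}=3$, $\Rh_E\p{K+3}=K+3$, $\Rh_E\p{K+4}=K+\Bs+1$, $\Rh_E\p{K+5}=5$, which match the claimed values when we write the index as $K+5k+r$. For the inductive step, I would split into five cases according to $n-K \bmod 5$. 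In each case I expand $\Rh_E\p{n}$ via the $\Rh$-recurrence, substitute the three values $\Rh_E\p{n-1},\Rh_E\p{n-2},\Rh_E\p{n-3}$ using the induction hypothesis (these are given by the pattern since $n-1,n-2,n-3$ lie in the already-established range), compute the three arguments $n-\Rh_E\p{n-i}$, and then evaluate $\Rh_E$ at each of those arguments.

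The key arithmetic bookkeeping, exactly as in Lemma~\ref{lem:7cyc}, is to check for each of the three resulting arguments whether it is (a) nonpositive, so the value is $0$ by the $\bar 0$ convention; (b) one of the small fixed indices $1,2,3,\ldots$ landing in the initial block $1,2,3$, giving values $1,2,3$; (c) an index of the form $K+5j+r'$ inside the valid range, where the induction hypothesis applies; or (d) an index $\le K$ but $>3$ landing among the arbitrary $a_i$ — this last case must be ruled out, i.e. I must show that no argument ever lands in the ``garbage'' zone $4\le \cdot\le K$ except possibly at the three small indices $1,2,3$. This is where the upper cutoff $K+\fl{\frac{5\Bs-15}{2}}$ comes from: the induction is only claimed to hold while the arguments stay controlled, and the bound $\Bs\ge 1$ together with this cutoff should guarantee that the arguments of the form $n-\Rh_E\p{n-i}$ either exceed $K$ (so they fall in the pattern region) or are $\le 3$. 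Concretely, in the cases producing the ``large'' values $K+3k+\Bs$, $K+5k+3$, $K+3k+\Bs+1$ the recurrence will pick out a term like $\Rh_E\p{\text{small constant}}+\Rh_E\p{\text{index}\le K}+\Rh_E\p{\text{something nonpositive}}$ and I expect the dominant summand to be $K+\Bs$ or $K+3$ from the initial conditions (or $K+3(k-1)+\Bs$ etc. from an earlier pattern term), plus a $3$ or a $0$; in the cases producing $5$ and $3$ I expect all three arguments to be small fixed indices evaluating to things like $0+2+3$ or $0+1+2$ analogous to the ``$n-M\equiv3$'' case in Lemma~\ref{lem:7cyc}.

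The main obstacle will be Case (d): pinning down precisely why the cutoff is $K+\fl{\frac{5\Bs-15}{2}}$ and not something larger — that is, identifying the first index at which one of the three recurrence arguments would fall into the uncontrolled range $4\le\cdot\le K$ (or would require a pattern value beyond the established range), which forces the induction to stop. I would handle this the same way Lemma~\ref{lem:7cyc} handles its cutoff $2K+\nu$: translate the constraint ``$n\le K+\fl{\frac{5\Bs-15}{2}}$'' into an upper bound on $k$ in each of the five cases, then verify that under that bound every argument is either $\le 3$ or $>K$ or nonpositive, with the tightest constraint arising in exactly one of the cases and yielding the stated floor expression. Since the lemma is quoted from~\cite{thesis} and stated without proof, and the paper explicitly notes these bounds are ``generally not optimized,'' I would not belabor optimality; the goal is only to exhibit a correct cutoff of the claimed form, and the five-case induction above does that by routine (if tedious) inequality-chasing identical in spirit to the fully worked Lemma~\ref{lem:7cyc}.
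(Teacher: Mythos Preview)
Your proposal is correct and matches the paper's approach exactly: the paper states Lemma~\ref{lem:5cyc} without proof, noting only that ``all proofs are straightforward but tedious inductive arguments, much like the proof of Lemma~\ref{lem:7cyc},'' which is precisely the five-case strong induction you outline. One small correction to your sketch: in the case producing the value~$3$ (residue~$2$), the middle summand is not a small fixed index but rather the prior pattern value $\Rh_E\p{K+5(k-1)+2}=3$, so the decomposition is $0+3+0$ with the $3$ supplied inductively; otherwise your case analysis and your identification of the cutoff as the point where an argument of the form $2k+c-\Bs$ first becomes positive are on target.
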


\begin{lemma}\label{lem:16cyc1}
Let $K$, $\As$, $\Bs_1$, $\Bs_2$, and $\Cs$ be positive integers with $\As>31+K$, $\Bs_1>\As$, $\Bs_2>\As$, and $\Cs>\As$.
Then, for arbitrary integers $a_1,a_2,\ldots,a_K$, denote the sequence resulting from the $\Rh$-recurrence and the 
initial conditions
\[
\abk{\bar{0};a_1,a_2,\ldots,a_K,\As,7,\Bs_2,16,\Bs_2,16,\Bs_1,\As,7,\Bs_2,16,2\Bs_2,16,\Bs_2,25,\Cs,\As,7}
\]
 by $\Rh_T$.  
The sequence $\Rh_T$ follows the following pattern from $\Rh_T\p{K+1}$ through $\Rh_T(\As)$:
\[
\begin{cases}
\Rh_T\p{K+16k}=\Bs_1\cdot2^{k-1}+\Cs-\Bs_1\\
\Rh_T\p{K+16k+1}=\As\\
\Rh_T\p{K+16k+2}=7\\
\Rh_T\p{K+16k+3}=\Bs_2\cdot2^k\\
\Rh_T\p{K+16k+4}=16\\
\Rh_T\p{K+16k+5}=\Bs_2\cdot2^k\\
\Rh_T\p{K+16k+6}=16\\
\Rh_T\p{K+16k+7}=\Bs_1\cdot2^k
\end{cases}
\quad
\begin{cases}
\Rh_T\p{K+16k+8}=\As\\
\Rh_T\p{K+16k+9}=7\\
\Rh_T\p{K+16k+10}=\Bs_2\cdot2^k\\
\Rh_T\p{K+16k+11}=16\\
\Rh_T\p{K+16k+12}=\Bs_2\cdot2^k\\
\Rh_T\p{K+16k+13}=16\\
\Rh_T\p{K+16k+14}=\Bs_2\cdot2^k\\
\Rh_T\p{K+16k+15}=25.
\end{cases}
\]
\end{lemma}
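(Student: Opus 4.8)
The plan is to prove Lemma~\ref{lem:16cyc1} by strong induction on the index, in exactly the style of the proof of Lemma~\ref{lem:7cyc}. The base cases are the terms $\Rh_T\p{K+1}$ through $\Rh_T\p{K+18}$, all of which are supplied verbatim by the given initial conditions; one checks by inspection that these eighteen values realize the $k=0$ block of the table together with enough of the $k=1$ block to ground the entries that reference values two periods back (such as the line for $\Rh_T\p{K+16k}$). For the inductive step one fixes $n$ with $K+19\le n\le\As$, assumes the table holds at every index from $K+1$ up to $n-1$, writes $n=K+16k+j$ for the appropriate $j\in\st{0,1,\ldots,15}$, applies the $\Rh$-recurrence, and substitutes the values of $\Rh_T\p{n-1}$, $\Rh_T\p{n-2}$, $\Rh_T\p{n-3}$ given by the induction hypothesis.

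The crucial point is that, for the indices in question, the three inner arguments $n-\Rh_T\p{n-i}$ fall into just two types. Whenever $\Rh_T\p{n-i}$ is one of the ``large'' table values --- namely $\As$, $\Bs_1\cdot2^{k}$, $\Bs_2\cdot2^{k}$, $\Cs$, or $\Bs_1\cdot2^{k-1}+\Cs-\Bs_1$ --- the hypotheses $\As>31+K$ and $\Bs_1,\Bs_2,\Cs>\As$ (together with $2^k\ge1$) force $\Rh_T\p{n-i}\ge\As\ge n$, so the inner argument is at most $n-\As\le0$ and that term contributes $0$. This is also exactly why the pattern is guaranteed only through index $\As$: at $n=\As+1$ an inner argument $n-\As=1$ would reference the arbitrary value $a_1$. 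The only ``live'' arguments are $n-7$, $n-16$, $n-25$, arising when $\Rh_T\p{n-i}\in\st{7,16,25}$; for $n\ge K+19$ one checks in each of the sixteen cases that these land at an index $\ge K+1$ (not in the arbitrary segment $a_1,\ldots,a_K$) and at a specific residue modulo $16$, whence the induction hypothesis supplies their values. Summing the at most two nonzero contributions reproduces the claimed entry; the doubling entries (e.g.\ $\Rh_T\p{K+16k+7}=\Bs_1\cdot2^{k}$) occur precisely because two of the three contributions each equal the same power-of-two value one period earlier while the third vanishes.

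The work is therefore entirely bookkeeping: sixteen cases, in each of which one must identify which inner arguments vanish, determine the residue class modulo $16$ of the surviving one or two arguments, and confirm that for $n$ near the bottom of the range $[K+19,\As]$ no surviving argument slips into the arbitrary initial segment. This last verification is the main (and only) obstacle, and it is the same bounded-but-tedious index tracking done case-by-case in the proof of Lemma~\ref{lem:7cyc}; the constant $31$ in $\As>31+K$ is there to accommodate references reaching roughly two periods back and is not optimized. The one feature absent from Lemma~\ref{lem:7cyc} is that some table entries grow geometrically, so the induction must carry the exponent $k$ along and the ``large value'' inequalities must hold uniformly in $k$ --- which $2^k\ge1$ makes immediate. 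No further idea is needed, which is why the details are deferred to~\cite{thesis}.
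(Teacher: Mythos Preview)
Your proposal is correct and matches exactly what the paper indicates: the paper does not prove this lemma but states that its proof is a ``straightforward but tedious inductive argument, much like the proof of Lemma~\ref{lem:7cyc}'', which is precisely the strong induction on the index with sixteen residue cases that you outline. One small caveat: your assertion that the eighteen initial values realize the $k=0$ block ``by inspection'' fails at index $K+12$, where the stated initial condition is $2\Bs_2$ but the table entry $\Bs_2\cdot2^0$ gives $\Bs_2$---this is evidently a typo in the lemma as printed (carrying out the inductive step at residue~$12$ actually yields $\Bs_2\cdot2^{k+1}$, consistent with the initial value $2\Bs_2$), not a flaw in your method.
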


\begin{lemma}\label{lem:16cyc2}
Let $K$, $\As$, $\Bs_1$, $\Bs_2$, $\Cs_1$, $\Cs_2$, and $\Cs_3$ be positive integers with $\As>31+K$, $\Bs_1>\As$, $\Bs_2>\As$, $\Cs_1>\As$,  $\Cs_2>\As$, and $\Cs_3>\As$.
Then, for arbitrary integers $a_1,a_2,\ldots,a_K$, denote the sequence resulting from the $\Rh$-recurrence and the 
initial conditions
\[
\abk{\bar{0};a_1,a_2,\ldots,a_K,16,\Bs_2,7,\Cs_2,\As,16,\As,16,\Bs_1,10,\Cs_3,\Bs_2,7,\As,16,\Cs_1}
\]
 by $\Rh_U$.  
The sequence $\Rh_U$ follows the following pattern from $\Rh_U\p{K+1}$ through $\Rh_U(\As)$:
\[
\begin{cases}
\Rh_U\p{K+16k}=\Bs_1\cdot2^k+\Cs_1-2\Bs_1\\
\Rh_U\p{K+16k+1}=16\\
\Rh_U\p{K+16k+2}=\Bs_2\cdot2^k\\
\Rh_U\p{K+16k+3}=7\\
\Rh_U\p{K+16k+4}=7k+\Cs_2\\
\Rh_U\p{K+16k+5}=\As\\
\Rh_U\p{K+16k+6}=16\\
\Rh_U\p{K+16k+7}=\As
\end{cases}
\quad
\begin{cases}
\Rh_U\p{K+16k+8}=16\\
\Rh_U\p{K+16k+9}=\Bs_1\cdot2^k\\
\Rh_U\p{K+16k+10}=10\\
\Rh_U\p{K+16k+11}=16k+\Cs_3\\
\Rh_U\p{K+16k+12}=\Bs_2\cdot2^k\\
\Rh_U\p{K+16k+13}=7\\
\Rh_U\p{K+16k+14}=\As\\
\Rh_U\p{K+16k+15}=16.
\end{cases}
\]
\end{lemma}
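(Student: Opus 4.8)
The plan is to prove Lemma~\ref{lem:16cyc2} by strong induction on the index, in exactly the same style as the proof of Lemma~\ref{lem:7cyc}, but with period $16$ in place of period $7$. The base cases are the sixteen indices $\Rh_U\p{K+1}$ through $\Rh_U\p{K+16}$, all of which are supplied directly by the stated initial conditions; one checks by inspection that these match the pattern, reading the sixteen tabulated entries with $k=0$ for indices $K+1,\ldots,K+15$ and with $k=1$ (so that $\Rh_U\p{K+16k}=\Bs_1\cdot2^{k}+\Cs_1-2\Bs_1$ evaluates to $\Cs_1$) for index $K+16$. For the inductive step, fix $n$ with $K+17\leq n\leq\As$, write $n=K+16k+r$ with $0\leq r\leq15$, and split into sixteen cases according to $r$. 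In each case one applies the $\Rh$-recurrence, substitutes the values of $\Rh_U\p{n-1}$, $\Rh_U\p{n-2}$, $\Rh_U\p{n-3}$ (known by the inductive hypothesis), and then evaluates the three nested lookups $\Rh_U\pb{n-\Rh_U\p{n-j}}$.

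The heart of the argument is showing that each nested lookup argument lands in a region where the value is already pinned down, independent of the arbitrary values $a_1,\ldots,a_K$. The inequalities $\As>31+K$ and $\Bs_1,\Bs_2,\Cs_1,\Cs_2,\Cs_3>\As$, together with the running bound $n\leq\As$, are exactly what make this work. Concretely: whenever $\Rh_U\p{n-j}$ is one of the "large" entries ($\As$, $\Bs_1\cdot2^{\ell}$, $\Bs_2\cdot2^{\ell}$, or one of the arithmetic progressions $7\ell+\Cs_2$, $16\ell+\Cs_3$), the argument $n-\Rh_U\p{n-j}$ is nonpositive, so the lookup returns $0$ by the $\bar 0$ convention — for the progression terms this uses $16k<\Cs_2-K$ and $16k<\Cs_3-K$, which follow from $n\leq\As<\Cs_2,\Cs_3$; and whenever $\Rh_U\p{n-j}$ is one of the small constants $7,10,16,25$, the argument lies in $\st{K+1,\ldots,n-1}$ and is known by induction. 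In no case does an argument fall into $\st{1,\ldots,K}$, so the $a_i$ never enter. Summing the at most two nonzero contributions recovers the claimed value; the doubling of $\Bs_1\cdot2^k$ and $\Bs_2\cdot2^k$ appears exactly as in the representative computation $\Rh_U\p{K+16k}=\Rh_U\pb{K+16(k-1)}+\Rh_U\p{K+16k-\As}+\Rh_U\pb{K+16(k-1)+9}=\pb{\Bs_1\cdot2^{k-1}+\Cs_1-2\Bs_1}+0+\Bs_1\cdot2^{k-1}$. One also notes in passing that every pattern value is positive, so the sequence neither dies nor ends within the claimed range and the recurrence is legitimately applicable throughout.

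The main obstacle is not conceptual but bookkeeping: there are sixteen residue cases, each with three nested lookups whose arguments must be bounded, and for several cases (notably those adjacent to the $7k+\Cs_2$ and $16k+\Cs_3$ entries, whose values grow with $k$) one must carefully use the endpoint constraint $n\leq\As$ to cap $k$ before concluding the lookup argument is nonpositive. Keeping track, case by case, of how large $\As$ — hence how many periods — is permitted is precisely why the constant $31$ and the strict inequalities on $\Bs_1,\Bs_2,\Cs_1,\Cs_2,\Cs_3$ are imposed; as the paper notes, these bounds are comfortably sufficient rather than optimized, which removes any need for delicate estimates. Apart from this volume of routine verification, the proof is a line-for-line analogue of the proof of Lemma~\ref{lem:7cyc}.
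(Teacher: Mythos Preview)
Your plan is correct and is precisely the approach the paper indicates: it explicitly states that Lemma~\ref{lem:16cyc2} (like Lemmas~\ref{lem:2cyc}, \ref{lem:5cyc}, and \ref{lem:16cyc1}) is taken from~\cite{thesis} and ``stated without proof,'' adding that ``All proofs are straightforward but tedious inductive arguments, much like the proof of Lemma~\ref{lem:7cyc}.'' Your sixteen-case induction with the large/small dichotomy on the subtracted values is exactly that argument, and your representative case $r=0$ is computed correctly; the remaining fifteen cases go through in the same way. One tiny slip: the constant $25$ you list among the ``small constants'' belongs to Lemma~\ref{lem:16cyc1}, not Lemma~\ref{lem:16cyc2}; the only small values arising here are $7$, $10$, and $16$.
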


For $N\in\st{81,182,429,822,1892,2789,7292,23511,25163}$, Lemma~\ref{lem:2cyc} eventually applies, so these sequences continue forever. Both $B_{\overline{193}}$ and $B_{\overline{3442}}$ also continue forever. Infinitely many prefixes of these sequences satisfy the hypotheses of Lemma~\ref{lem:5cyc}. In other words, these sequences consist of infinitely many chunks of the sort described by Lemma~\ref{lem:5cyc} with some sporadic terms in between. Each such chunk lasts approximately six times as long as the previous one. The first ten million terms of $\Rh_{\overline{193}}$ are shown in Figure~\ref{fig:b193}.
The only remaining sequence is $B_{\overline{20830}}$. This sequence ends, but it has a total of $84975\cdot2^{560362}+31$ terms, far too many to compute. First, some initial terms of $B_{\overline{20830}}$ satisfy the conditions of Lemma~\ref{lem:16cyc1}, so that lemma governs the behavior of the sequence for awhile. Then, shortly after the chunk of $B_{\overline{20830}}$ described by Lemma~\ref{lem:16cyc1} concludes, Lemma~\ref{lem:16cyc2} applies. That lemma then governs the behavior of the sequence for a long time. That chunk terminates at index $84975\cdot2^{560362}$, after which the sequence lasts only $31$ additional terms~\cite{oeis}*{A283887}. 

\begin{figure}
	\begin{multicols}{2}
		\includegraphics[width=3.5in]{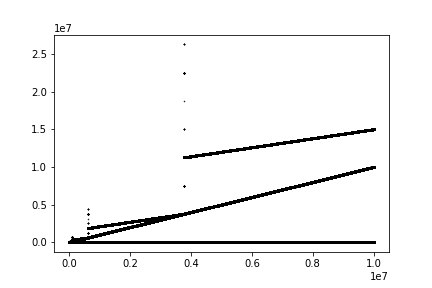}
		
		\includegraphics[width=3.5in]{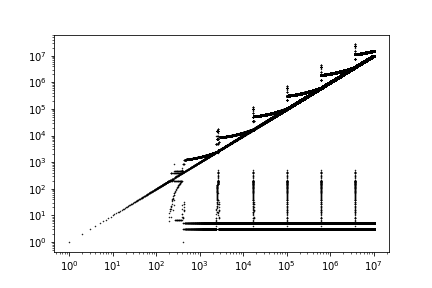}
	\end{multicols}
	\caption{Plots of the first 10,000,000 terms of $\Rh_{\overline{193}}$ with linear scale (left) and logarithmic scale (right)}
	\label{fig:b193}
\end{figure}

\section{Future Work}\label{s:future}
Studying nested recurrence relations with symbolic initial conditions of this type was initiated in~\cite{hof1thruN} with Hofstadter's $Q$-recurrence, and it is continued here with the three-term analog to that recurrence. The obvious idea is to continue adding more terms to the recurrence. But, that study was undertaken in~\cite{thesis}, and it suggests that chaos reigns supreme for the recurrences with four or more terms, even for large $N$. One question, then, would be whether any other symbolic initial conditions lead to predictable solutions to these recurrences. Another direction would be to study these same sorts of initial conditions with other recurrences, such as the Conolly recurrence~\cite{con} or the Tanny recurrence~\cite{tanny}.

The other big research direction suggested by this work is further exploration and discovery of results like Lemmas~\ref{lem:7cyc}, \ref{lem:2cyc}, \ref{lem:5cyc}, \ref{lem:16cyc1}, and~\ref{lem:16cyc2}. These five lemmas all describe temporary or permanent solutions to the $\Rh$-recurrence that consist of interleavings of simpler sequences. These particular results appear in this paper because they are needed to analyze the sequences $\Rh_{\bar{N}}$ for various values of $N$. But, they are by no means exhaustive among results of this type. The first known solution of this type to a nested recurrence is Golomb's solution to the $Q$-recurrence~\cite{golomb}, which uses initial conditions $\abk{3,2,1}$. Later, Ruskey gave a solution to the $Q$-recurrence with exponentially growing subsequences~\cite{rusk} via initial conditions $\abk{\bar{0};3,6,5,3,6,8}$. A general algorithmic framework for finding and proving these sorts of solutions was laid out in~\cite{symbhof}. All of these solutions last forever; the first known occurrence of temporary interleaved solutions is in~\cite{hof1thruN}, in analogous work to this paper on the $Q$-recurrence. Perhaps such solutions could also be worked into the framework from~\cite{symbhof}.

\appendix

\section{First $24$ terms following initial conditions of $\Rh_N$}\label{app:Qwd}

Assuming $N\geq9$, these are the first $24$ terms of $\Rh_N$ following the initial conditions.

\begin{align*}
\mbb{\Rh_N\p{N+1}}&=\Rh_N\p{N+1-\Rh_N\p{N}}+\Rh_N\p{N+1-\Rh_N\p{N-1}}\\
&\phantom{=}+\Rh_N\p{N+1-\Rh_N\p{N-2}}\\
&=\Rh_N\p{N+1-N}+\Rh_N\p{N+1-\p{N - 1}}+\Rh_N\p{N+1-\p{N - 2}}\\
&=\Rh_N\p{1}+\Rh_N\p{2}+\Rh_N\p{3}=1+2+3=\mbb{6}\\
&(N\geq3)
\end{align*}
\begin{align*}
\mbb{\Rh_N\p{N+2}}&=\Rh_N\p{N+2-\Rh_N\p{N+1}}+\Rh_N\p{N+2-\Rh_N\p{N}}\\
&\phantom{=}+\Rh_N\p{N+2-\Rh_N\p{N-1}}\\
&=\Rh_N\p{N+2-6}+\Rh_N\p{N+2-N}+\Rh_N\p{N+2-\p{N - 1}}\\
&=\Rh_N\p{N - 4}+\Rh_N\p{2}+\Rh_N\p{3}=\p{N - 4}+2+3=\mbb{N + 1}\\
&(N\geq5)
\end{align*}
\begin{align*}
\mbb{\Rh_N\p{N+3}}&=\Rh_N\p{N+3-\Rh_N\p{N+2}}+\Rh_N\p{N+3-\Rh_N\p{N+1}}\\
&\phantom{=}+\Rh_N\p{N+3-\Rh_N\p{N}}\\
&=\Rh_N\p{N+3-\p{N + 1}}+\Rh_N\p{N+3-6}+\Rh_N\p{N+3-N}\\
&=\Rh_N\p{2}+\Rh_N\p{N - 3}+\Rh_N\p{3}=2+\p{N - 3}+3=\mbb{N + 2}\\
&(N\geq4)
\end{align*}
\begin{align*}
\mbb{\Rh_N\p{N+4}}&=\Rh_N\p{N+4-\Rh_N\p{N+3}}+\Rh_N\p{N+4-\Rh_N\p{N+2}}\\
&\phantom{=}+\Rh_N\p{N+4-\Rh_N\p{N+1}}\\
&=\Rh_N\p{N+4-\p{N + 2}}+\Rh_N\p{N+4-\p{N + 1}}+\Rh_N\p{N+4-6}\\
&=\Rh_N\p{2}+\Rh_N\p{3}+\Rh_N\p{N - 2}=2+3+\p{N - 2}=\mbb{N + 3}\\
&(N\geq3)
\end{align*}
\begin{align*}
\mbb{\Rh_N\p{N+5}}&=\Rh_N\p{N+5-\Rh_N\p{N+4}}+\Rh_N\p{N+5-\Rh_N\p{N+3}}\\
&\phantom{=}+\Rh_N\p{N+5-\Rh_N\p{N+2}}\\
&=\Rh_N\p{N+5-\p{N + 3}}+\Rh_N\p{N+5-\p{N + 2}}+\Rh_N\p{N+5-\p{N + 1}}\\
&=\Rh_N\p{2}+\Rh_N\p{3}+\Rh_N\p{4}=2+3+4=\mbb{9}\\
&(N\geq4)
\end{align*}
\begin{align*}
\mbb{\Rh_N\p{N+6}}&=\Rh_N\p{N+6-\Rh_N\p{N+5}}+\Rh_N\p{N+6-\Rh_N\p{N+4}}\\
&\phantom{=}+\Rh_N\p{N+6-\Rh_N\p{N+3}}\\
&=\Rh_N\p{N+6-9}+\Rh_N\p{N+6-\p{N + 3}}+\Rh_N\p{N+6-\p{N + 2}}\\
&=\Rh_N\p{N - 3}+\Rh_N\p{3}+\Rh_N\p{4}=\p{N - 3}+3+4=\mbb{N + 4}\\
&(N\geq4)
\end{align*}
\begin{align*}
\mbb{\Rh_N\p{N+7}}&=\Rh_N\p{N+7-\Rh_N\p{N+6}}+\Rh_N\p{N+7-\Rh_N\p{N+5}}\\
&\phantom{=}+\Rh_N\p{N+7-\Rh_N\p{N+4}}\\
&=\Rh_N\p{N+7-\p{N + 4}}+\Rh_N\p{N+7-9}+\Rh_N\p{N+7-\p{N + 3}}\\
&=\Rh_N\p{3}+\Rh_N\p{N - 2}+\Rh_N\p{4}=3+\p{N - 2}+4=\mbb{N + 5}\\
&(N\geq4)
\end{align*}
\begin{align*}
\mbb{\Rh_N\p{N+8}}&=\Rh_N\p{N+8-\Rh_N\p{N+7}}+\Rh_N\p{N+8-\Rh_N\p{N+6}}\\
&\phantom{=}+\Rh_N\p{N+8-\Rh_N\p{N+5}}\\
&=\Rh_N\p{N+8-\p{N + 5}}+\Rh_N\p{N+8-\p{N + 4}}+\Rh_N\p{N+8-9}\\
&=\Rh_N\p{3}+\Rh_N\p{4}+\Rh_N\p{N - 1}=3+4+\p{N - 1}=\mbb{N + 6}\\
&(N\geq4)
\end{align*}
\begin{align*}
\mbb{\Rh_N\p{N+9}}&=\Rh_N\p{N+9-\Rh_N\p{N+8}}+\Rh_N\p{N+9-\Rh_N\p{N+7}}\\
&\phantom{=}+\Rh_N\p{N+9-\Rh_N\p{N+6}}\\
&=\Rh_N\p{N+9-\p{N + 6}}+\Rh_N\p{N+9-\p{N + 5}}+\Rh_N\p{N+9-\p{N + 4}}\\
&=\Rh_N\p{3}+\Rh_N\p{4}+\Rh_N\p{5}=3+4+5=\mbb{12}\\
&(N\geq5)
\end{align*}
\begin{align*}
\mbb{\Rh_N\p{N+10}}&=\Rh_N\p{N+10-\Rh_N\p{N+9}}+\Rh_N\p{N+10-\Rh_N\p{N+8}}\\
&\phantom{=}+\Rh_N\p{N+10-\Rh_N\p{N+7}}\\
&=\Rh_N\p{N+10-12}+\Rh_N\p{N+10-\p{N + 6}}+\Rh_N\p{N+10-\p{N + 5}}\\
&=\Rh_N\p{N - 2}+\Rh_N\p{4}+\Rh_N\p{5}=\p{N - 2}+4+5=\mbb{N + 7}\\
&(N\geq5)
\end{align*}
\begin{align*}
\mbb{\Rh_N\p{N+11}}&=\Rh_N\p{N+11-\Rh_N\p{N+10}}+\Rh_N\p{N+11-\Rh_N\p{N+9}}\\
&\phantom{=}+\Rh_N\p{N+11-\Rh_N\p{N+8}}\\
&=\Rh_N\p{N+11-\p{N + 7}}+\Rh_N\p{N+11-12}+\Rh_N\p{N+11-\p{N + 6}}\\
&=\Rh_N\p{4}+\Rh_N\p{N - 1}+\Rh_N\p{5}=4+\p{N - 1}+5=\mbb{N + 8}\\
&(N\geq5)
\end{align*}
\begin{align*}
\mbb{\Rh_N\p{N+12}}&=\Rh_N\p{N+12-\Rh_N\p{N+11}}+\Rh_N\p{N+12-\Rh_N\p{N+10}}\\
&\phantom{=}+\Rh_N\p{N+12-\Rh_N\p{N+9}}\\
&=\Rh_N\p{N+12-\p{N + 8}}+\Rh_N\p{N+12-\p{N + 7}}+\Rh_N\p{N+12-12}\\
&=\Rh_N\p{4}+\Rh_N\p{5}+\Rh_N\p{N}=4+5+N=\mbb{N + 9}\\
&(N\geq5)
\end{align*}
\begin{align*}
\mbb{\Rh_N\p{N+13}}&=\Rh_N\p{N+13-\Rh_N\p{N+12}}+\Rh_N\p{N+13-\Rh_N\p{N+11}}\\
&\phantom{=}+\Rh_N\p{N+13-\Rh_N\p{N+10}}\\
&=\Rh_N\p{N+13-\p{N + 9}}+\Rh_N\p{N+13-\p{N + 8}}+\Rh_N\p{N+13-\p{N + 7}}\\
&=\Rh_N\p{4}+\Rh_N\p{5}+\Rh_N\p{6}=4+5+6=\mbb{15}\\
&(N\geq6)
\end{align*}
\begin{align*}
\mbb{\Rh_N\p{N+14}}&=\Rh_N\p{N+14-\Rh_N\p{N+13}}+\Rh_N\p{N+14-\Rh_N\p{N+12}}\\
&\phantom{=}+\Rh_N\p{N+14-\Rh_N\p{N+11}}\\
&=\Rh_N\p{N+14-15}+\Rh_N\p{N+14-\p{N + 9}}+\Rh_N\p{N+14-\p{N + 8}}\\
&=\Rh_N\p{N - 1}+\Rh_N\p{5}+\Rh_N\p{6}=\p{N - 1}+5+6=\mbb{N + 10}\\
&(N\geq6)
\end{align*}
\begin{align*}
\mbb{\Rh_N\p{N+15}}&=\Rh_N\p{N+15-\Rh_N\p{N+14}}+\Rh_N\p{N+15-\Rh_N\p{N+13}}\\
&\phantom{=}+\Rh_N\p{N+15-\Rh_N\p{N+12}}\\
&=\Rh_N\p{N+15-\p{N + 10}}+\Rh_N\p{N+15-15}+\Rh_N\p{N+15-\p{N + 9}}\\
&=\Rh_N\p{5}+\Rh_N\p{N}+\Rh_N\p{6}=5+N+6=\mbb{N + 11}\\
&(N\geq6)
\end{align*}
\begin{align*}
\mbb{\Rh_N\p{N+16}}&=\Rh_N\p{N+16-\Rh_N\p{N+15}}+\Rh_N\p{N+16-\Rh_N\p{N+14}}\\
&\phantom{=}+\Rh_N\p{N+16-\Rh_N\p{N+13}}\\
&=\Rh_N\p{N+16-\p{N + 11}}+\Rh_N\p{N+16-\p{N + 10}}+\Rh_N\p{N+16-15}\\
&=\Rh_N\p{5}+\Rh_N\p{6}+\Rh_N\p{N + 1}=5+6+6=\mbb{17}\\
&(N\geq6)
\end{align*}
\begin{align*}
\mbb{\Rh_N\p{N+17}}&=\Rh_N\p{N+17-\Rh_N\p{N+16}}+\Rh_N\p{N+17-\Rh_N\p{N+15}}\\
&\phantom{=}+\Rh_N\p{N+17-\Rh_N\p{N+14}}\\
&=\Rh_N\p{N+17-17}+\Rh_N\p{N+17-\p{N + 11}}+\Rh_N\p{N+17-\p{N + 10}}\\
&=\Rh_N\p{N}+\Rh_N\p{6}+\Rh_N\p{7}=N+6+7=\mbb{N + 13}\\
&(N\geq7)
\end{align*}
\begin{align*}
\mbb{\Rh_N\p{N+18}}&=\Rh_N\p{N+18-\Rh_N\p{N+17}}+\Rh_N\p{N+18-\Rh_N\p{N+16}}\\
&\phantom{=}+\Rh_N\p{N+18-\Rh_N\p{N+15}}\\
&=\Rh_N\p{N+18-\p{N + 13}}+\Rh_N\p{N+18-17}+\Rh_N\p{N+18-\p{N + 11}}\\
&=\Rh_N\p{5}+\Rh_N\p{N + 1}+\Rh_N\p{7}=5+6+7=\mbb{18}\\
&(N\geq7)
\end{align*}
\begin{align*}
\mbb{\Rh_N\p{N+19}}&=\Rh_N\p{N+19-\Rh_N\p{N+18}}+\Rh_N\p{N+19-\Rh_N\p{N+17}}\\
&\phantom{=}+\Rh_N\p{N+19-\Rh_N\p{N+16}}\\
&=\Rh_N\p{N+19-18}+\Rh_N\p{N+19-\p{N + 13}}+\Rh_N\p{N+19-17}\\
&=\Rh_N\p{N + 1}+\Rh_N\p{6}+\Rh_N\p{N + 2}=6+6+\p{N + 1}=\mbb{N + 13}\\
&(N\geq6)
\end{align*}
\begin{align*}
\mbb{\Rh_N\p{N+20}}&=\Rh_N\p{N+20-\Rh_N\p{N+19}}+\Rh_N\p{N+20-\Rh_N\p{N+18}}\\
&\phantom{=}+\Rh_N\p{N+20-\Rh_N\p{N+17}}\\
&=\Rh_N\p{N+20-\p{N + 13}}+\Rh_N\p{N+20-18}+\Rh_N\p{N+20-\p{N + 13}}\\
&=\Rh_N\p{7}+\Rh_N\p{N + 2}+\Rh_N\p{7}=7+\p{N + 1}+7=\mbb{N + 15}\\
&(N\geq7)
\end{align*}
\begin{align*}
\mbb{\Rh_N\p{N+21}}&=\Rh_N\p{N+21-\Rh_N\p{N+20}}+\Rh_N\p{N+21-\Rh_N\p{N+19}}\\
&\phantom{=}+\Rh_N\p{N+21-\Rh_N\p{N+18}}\\
&=\Rh_N\p{N+21-\p{N + 15}}+\Rh_N\p{N+21-\p{N + 13}}+\Rh_N\p{N+21-18}\\
&=\Rh_N\p{6}+\Rh_N\p{8}+\Rh_N\p{N + 3}=6+8+\p{N + 2}=\mbb{N + 16}\\
&(N\geq8)
\end{align*}
\begin{align*}
\mbb{\Rh_N\p{N+22}}&=\Rh_N\p{N+22-\Rh_N\p{N+21}}+\Rh_N\p{N+22-\Rh_N\p{N+20}}\\
&\phantom{=}+\Rh_N\p{N+22-\Rh_N\p{N+19}}\\
&=\Rh_N\p{N+22-\p{N + 16}}+\Rh_N\p{N+22-\p{N + 15}}\\
&\phantom{=}+\Rh_N\p{N+22-\p{N + 13}}\\
&=\Rh_N\p{6}+\Rh_N\p{7}+\Rh_N\p{9}=6+7+9=\mbb{22}\\
&(N\geq9)
\end{align*}
\begin{align*}
\mbb{\Rh_N\p{N+23}}&=\Rh_N\p{N+23-\Rh_N\p{N+22}}+\Rh_N\p{N+23-\Rh_N\p{N+21}}\\
&\phantom{=}+\Rh_N\p{N+23-\Rh_N\p{N+20}}\\
&=\Rh_N\p{N+23-22}+\Rh_N\p{N+23-\p{N + 16}}+\Rh_N\p{N+23-\p{N + 15}}\\
&=\Rh_N\p{N + 1}+\Rh_N\p{7}+\Rh_N\p{8}=6+7+8=\mbb{21}\\
&(N\geq8)
\end{align*}
\begin{align*}
\mbb{\Rh_N\p{N+24}}&=\Rh_N\p{N+24-\Rh_N\p{N+23}}+\Rh_N\p{N+24-\Rh_N\p{N+22}}\\
&\phantom{=}+\Rh_N\p{N+24-\Rh_N\p{N+21}}\\
&=\Rh_N\p{N+24-21}+\Rh_N\p{N+24-22}+\Rh_N\p{N+24-\p{N + 16}}\\
&=\Rh_N\p{N + 3}+\Rh_N\p{N + 2}+\Rh_N\p{8}=\p{N + 2}+\p{N + 1}+8=\mbb{2N + 11}\\
&(N\geq8)
\end{align*}

\section{Terms $25$ through $69$ following initial conditions of $\Rh_{\bar N}$}\label{app:Qbwd}

Assuming that $N\geq67$, these are terms $25$ through $69$ of $\Rh_{\bar N}$ following the initial conditions. (Terms $1$ through $24$ agree with $\Rh_N$ and can be found in Appendix~\ref{app:Qwd}.)

\begin{align*}
\mbb{\Rh_{\bar N}\p{N+25}}&=\Rh_{\bar N}\p{N+25-\Rh_{\bar N}\p{N+24}}+\Rh_{\bar N}\p{N+25-\Rh_{\bar N}\p{N+23}}\\
&\phantom{=}+\Rh_{\bar N}\p{N+25-\Rh_{\bar N}\p{N+22}}\\
&=\Rh_{\bar N}\p{N+25-\p{2N + 11}}+\Rh_{\bar N}\p{N+25-21}+\Rh_{\bar N}\p{N+25-22}\\
&=\Rh_{\bar N}\p{-N + 14}+\Rh_{\bar N}\p{N + 4}+\Rh_{\bar N}\p{N + 3}\\
&=0+\p{N + 3}+\p{N + 2}=\mbb{2N + 5}\\
&(N\geq14)
\end{align*}
\begin{align*}
\mbb{\Rh_{\bar N}\p{N+26}}&=\Rh_{\bar N}\p{N+26-\Rh_{\bar N}\p{N+25}}+\Rh_{\bar N}\p{N+26-\Rh_{\bar N}\p{N+24}}\\
&\phantom{=}+\Rh_{\bar N}\p{N+26-\Rh_{\bar N}\p{N+23}}\\
&=\Rh_{\bar N}\p{N+26-\p{2N + 5}}+\Rh_{\bar N}\p{N+26-\p{2N + 11}}+\Rh_{\bar N}\p{N+26-21}\\
&=\Rh_{\bar N}\p{-N + 21}+\Rh_{\bar N}\p{-N + 15}+\Rh_{\bar N}\p{N + 5}=0+0+9=\mbb{9}\\
&(N\geq21)
\end{align*}
\begin{align*}
\mbb{\Rh_{\bar N}\p{N+27}}&=\Rh_{\bar N}\p{N+27-\Rh_{\bar N}\p{N+26}}+\Rh_{\bar N}\p{N+27-\Rh_{\bar N}\p{N+25}}\\
&\phantom{=}+\Rh_{\bar N}\p{N+27-\Rh_{\bar N}\p{N+24}}\\
&=\Rh_{\bar N}\p{N+27-9}+\Rh_{\bar N}\p{N+27-\p{2N + 5}}+\Rh_{\bar N}\p{N+27-\p{2N + 11}}\\
&=\Rh_{\bar N}\p{N + 18}+\Rh_{\bar N}\p{-N + 22}+\Rh_{\bar N}\p{-N + 16}=18+0+0=\mbb{18}\\
&(N\geq22)
\end{align*}
\begin{align*}
\mbb{\Rh_{\bar N}\p{N+28}}&=\Rh_{\bar N}\p{N+28-\Rh_{\bar N}\p{N+27}}+\Rh_{\bar N}\p{N+28-\Rh_{\bar N}\p{N+26}}\\
&\phantom{=}+\Rh_{\bar N}\p{N+28-\Rh_{\bar N}\p{N+25}}\\
&=\Rh_{\bar N}\p{N+28-18}+\Rh_{\bar N}\p{N+28-9}+\Rh_{\bar N}\p{N+28-\p{2N + 5}}\\
&=\Rh_{\bar N}\p{N + 10}+\Rh_{\bar N}\p{N + 19}+\Rh_{\bar N}\p{-N + 23}\\
&=\p{N + 7}+\p{N + 13}+0=\mbb{2N + 20}\\
&(N\geq23)
\end{align*}
\begin{align*}
\mbb{\Rh_{\bar N}\p{N+29}}&=\Rh_{\bar N}\p{N+29-\Rh_{\bar N}\p{N+28}}+\Rh_{\bar N}\p{N+29-\Rh_{\bar N}\p{N+27}}\\
&\phantom{=}+\Rh_{\bar N}\p{N+29-\Rh_{\bar N}\p{N+26}}\\
&=\Rh_{\bar N}\p{N+29-\p{2N + 20}}+\Rh_{\bar N}\p{N+29-18}+\Rh_{\bar N}\p{N+29-9}\\
&=\Rh_{\bar N}\p{-N + 9}+\Rh_{\bar N}\p{N + 11}+\Rh_{\bar N}\p{N + 20}\\
&=0+\p{N + 8}+\p{N + 15}=\mbb{2N + 23}\\
&(N\geq9)
\end{align*}
\begin{align*}
\mbb{\Rh_{\bar N}\p{N+30}}&=\Rh_{\bar N}\p{N+30-\Rh_{\bar N}\p{N+29}}+\Rh_{\bar N}\p{N+30-\Rh_{\bar N}\p{N+28}}\\
&\phantom{=}+\Rh_{\bar N}\p{N+30-\Rh_{\bar N}\p{N+27}}\\
&=\Rh_{\bar N}\p{N+30-\p{2N + 23}}+\Rh_{\bar N}\p{N+30-\p{2N + 20}}+\Rh_{\bar N}\p{N+30-18}\\
&=\Rh_{\bar N}\p{-N + 7}+\Rh_{\bar N}\p{-N + 10}+\Rh_{\bar N}\p{N + 12}=0+0+\p{N + 9}=\mbb{N + 9}\\
&(N\geq10)
\end{align*}
\begin{align*}
\mbb{\Rh_{\bar N}\p{N+31}}&=\Rh_{\bar N}\p{N+31-\Rh_{\bar N}\p{N+30}}+\Rh_{\bar N}\p{N+31-\Rh_{\bar N}\p{N+29}}\\
&\phantom{=}+\Rh_{\bar N}\p{N+31-\Rh_{\bar N}\p{N+28}}\\
&=\Rh_{\bar N}\p{N+31-\p{N + 9}}+\Rh_{\bar N}\p{N+31-\p{2N + 23}}\\
&\phantom{=}+\Rh_{\bar N}\p{N+31-\p{2N + 20}}\\
&=\Rh_{\bar N}\p{22}+\Rh_{\bar N}\p{-N + 8}+\Rh_{\bar N}\p{-N + 11}=22+0+0=\mbb{22}\\
&(N\geq22)
\end{align*}
\begin{align*}
\mbb{\Rh_{\bar N}\p{N+32}}&=\Rh_{\bar N}\p{N+32-\Rh_{\bar N}\p{N+31}}+\Rh_{\bar N}\p{N+32-\Rh_{\bar N}\p{N+30}}\\
&\phantom{=}+\Rh_{\bar N}\p{N+32-\Rh_{\bar N}\p{N+29}}\\
&=\Rh_{\bar N}\p{N+32-22}+\Rh_{\bar N}\p{N+32-\p{N + 9}}+\Rh_{\bar N}\p{N+32-\p{2N + 23}}\\
&=\Rh_{\bar N}\p{N + 10}+\Rh_{\bar N}\p{23}+\Rh_{\bar N}\p{-N + 9}=\p{N + 7}+23+0=\mbb{N + 30}\\
&(N\geq23)
\end{align*}
\begin{align*}
\mbb{\Rh_{\bar N}\p{N+33}}&=\Rh_{\bar N}\p{N+33-\Rh_{\bar N}\p{N+32}}+\Rh_{\bar N}\p{N+33-\Rh_{\bar N}\p{N+31}}\\
&\phantom{=}+\Rh_{\bar N}\p{N+33-\Rh_{\bar N}\p{N+30}}\\
&=\Rh_{\bar N}\p{N+33-\p{N + 30}}+\Rh_{\bar N}\p{N+33-22}+\Rh_{\bar N}\p{N+33-\p{N + 9}}\\
&=\Rh_{\bar N}\p{3}+\Rh_{\bar N}\p{N + 11}+\Rh_{\bar N}\p{24}=3+\p{N + 8}+24=\mbb{N + 35}\\
&(N\geq24)
\end{align*}
\begin{align*}
\mbb{\Rh_{\bar N}\p{N+34}}&=\Rh_{\bar N}\p{N+34-\Rh_{\bar N}\p{N+33}}+\Rh_{\bar N}\p{N+34-\Rh_{\bar N}\p{N+32}}\\
&\phantom{=}+\Rh_{\bar N}\p{N+34-\Rh_{\bar N}\p{N+31}}\\
&=\Rh_{\bar N}\p{N+34-\p{N + 35}}+\Rh_{\bar N}\p{N+34-\p{N + 30}}+\Rh_{\bar N}\p{N+34-22}\\
&=\Rh_{\bar N}\p{-1}+\Rh_{\bar N}\p{4}+\Rh_{\bar N}\p{N + 12}=0+4+\p{N + 9}=\mbb{N + 13}\\
&(N\geq4)
\end{align*}
\begin{align*}
\mbb{\Rh_{\bar N}\p{N+35}}&=\Rh_{\bar N}\p{N+35-\Rh_{\bar N}\p{N+34}}+\Rh_{\bar N}\p{N+35-\Rh_{\bar N}\p{N+33}}\\
&\phantom{=}+\Rh_{\bar N}\p{N+35-\Rh_{\bar N}\p{N+32}}\\
&=\Rh_{\bar N}\p{N+35-\p{N + 13}}+\Rh_{\bar N}\p{N+35-\p{N + 35}}\\
&\phantom{=}+\Rh_{\bar N}\p{N+35-\p{N + 30}}\\
&=\Rh_{\bar N}\p{22}+\Rh_{\bar N}\p{0}+\Rh_{\bar N}\p{5}=22+0+5=\mbb{27}\\
&(N\geq22)
\end{align*}
\begin{align*}
\mbb{\Rh_{\bar N}\p{N+36}}&=\Rh_{\bar N}\p{N+36-\Rh_{\bar N}\p{N+35}}+\Rh_{\bar N}\p{N+36-\Rh_{\bar N}\p{N+34}}\\
&\phantom{=}+\Rh_{\bar N}\p{N+36-\Rh_{\bar N}\p{N+33}}\\
&=\Rh_{\bar N}\p{N+36-27}+\Rh_{\bar N}\p{N+36-\p{N + 13}}+\Rh_{\bar N}\p{N+36-\p{N + 35}}\\
&=\Rh_{\bar N}\p{N + 9}+\Rh_{\bar N}\p{23}+\Rh_{\bar N}\p{1}=12+23+1=\mbb{36}\\
&(N\geq23)
\end{align*}
\begin{align*}
\mbb{\Rh_{\bar N}\p{N+37}}&=\Rh_{\bar N}\p{N+37-\Rh_{\bar N}\p{N+36}}+\Rh_{\bar N}\p{N+37-\Rh_{\bar N}\p{N+35}}\\
&\phantom{=}+\Rh_{\bar N}\p{N+37-\Rh_{\bar N}\p{N+34}}\\
&=\Rh_{\bar N}\p{N+37-36}+\Rh_{\bar N}\p{N+37-27}+\Rh_{\bar N}\p{N+37-\p{N + 13}}\\
&=\Rh_{\bar N}\p{N + 1}+\Rh_{\bar N}\p{N + 10}+\Rh_{\bar N}\p{24}=6+\p{N + 7}+24=\mbb{N + 37}\\
&(N\geq24)
\end{align*}
\begin{align*}
\mbb{\Rh_{\bar N}\p{N+38}}&=\Rh_{\bar N}\p{N+38-\Rh_{\bar N}\p{N+37}}+\Rh_{\bar N}\p{N+38-\Rh_{\bar N}\p{N+36}}\\
&\phantom{=}+\Rh_{\bar N}\p{N+38-\Rh_{\bar N}\p{N+35}}\\
&=\Rh_{\bar N}\p{N+38-\p{N + 37}}+\Rh_{\bar N}\p{N+38-36}+\Rh_{\bar N}\p{N+38-27}\\
&=\Rh_{\bar N}\p{1}+\Rh_{\bar N}\p{N + 2}+\Rh_{\bar N}\p{N + 11}=1+\p{N + 1}+\p{N + 8}=\mbb{2N + 10}\\
&(N\geq1)
\end{align*}
\begin{align*}
\mbb{\Rh_{\bar N}\p{N+39}}&=\Rh_{\bar N}\p{N+39-\Rh_{\bar N}\p{N+38}}+\Rh_{\bar N}\p{N+39-\Rh_{\bar N}\p{N+37}}\\
&\phantom{=}+\Rh_{\bar N}\p{N+39-\Rh_{\bar N}\p{N+36}}\\
&=\Rh_{\bar N}\p{N+39-\p{2N + 10}}+\Rh_{\bar N}\p{N+39-\p{N + 37}}+\Rh_{\bar N}\p{N+39-36}\\
&=\Rh_{\bar N}\p{-N + 29}+\Rh_{\bar N}\p{2}+\Rh_{\bar N}\p{N + 3}=0+2+\p{N + 2}=\mbb{N + 4}\\
&(N\geq29)
\end{align*}
\begin{align*}
\mbb{\Rh_{\bar N}\p{N+40}}&=\Rh_{\bar N}\p{N+40-\Rh_{\bar N}\p{N+39}}+\Rh_{\bar N}\p{N+40-\Rh_{\bar N}\p{N+38}}\\
&\phantom{=}+\Rh_{\bar N}\p{N+40-\Rh_{\bar N}\p{N+37}}\\
&=\Rh_{\bar N}\p{N+40-\p{N + 4}}+\Rh_{\bar N}\p{N+40-\p{2N + 10}}\\
&\phantom{=}+\Rh_{\bar N}\p{N+40-\p{N + 37}}\\
&=\Rh_{\bar N}\p{36}+\Rh_{\bar N}\p{-N + 30}+\Rh_{\bar N}\p{3}=36+0+3=\mbb{39}\\
&(N\geq36)
\end{align*}
\begin{align*}
\mbb{\Rh_{\bar N}\p{N+41}}&=\Rh_{\bar N}\p{N+41-\Rh_{\bar N}\p{N+40}}+\Rh_{\bar N}\p{N+41-\Rh_{\bar N}\p{N+39}}\\
&\phantom{=}+\Rh_{\bar N}\p{N+41-\Rh_{\bar N}\p{N+38}}\\
&=\Rh_{\bar N}\p{N+41-39}+\Rh_{\bar N}\p{N+41-\p{N + 4}}+\Rh_{\bar N}\p{N+41-\p{2N + 10}}\\
&=\Rh_{\bar N}\p{N + 2}+\Rh_{\bar N}\p{37}+\Rh_{\bar N}\p{-N + 31}=\p{N + 1}+37+0=\mbb{N + 38}\\
&(N\geq37)
\end{align*}
\begin{align*}
\mbb{\Rh_{\bar N}\p{N+42}}&=\Rh_{\bar N}\p{N+42-\Rh_{\bar N}\p{N+41}}+\Rh_{\bar N}\p{N+42-\Rh_{\bar N}\p{N+40}}\\
&\phantom{=}+\Rh_{\bar N}\p{N+42-\Rh_{\bar N}\p{N+39}}\\
&=\Rh_{\bar N}\p{N+42-\p{N + 38}}+\Rh_{\bar N}\p{N+42-39}+\Rh_{\bar N}\p{N+42-\p{N + 4}}\\
&=\Rh_{\bar N}\p{4}+\Rh_{\bar N}\p{N + 3}+\Rh_{\bar N}\p{38}=4+\p{N + 2}+38=\mbb{N + 44}\\
&(N\geq38)
\end{align*}
\begin{align*}
\mbb{\Rh_{\bar N}\p{N+43}}&=\Rh_{\bar N}\p{N+43-\Rh_{\bar N}\p{N+42}}+\Rh_{\bar N}\p{N+43-\Rh_{\bar N}\p{N+41}}\\
&\phantom{=}+\Rh_{\bar N}\p{N+43-\Rh_{\bar N}\p{N+40}}\\
&=\Rh_{\bar N}\p{N+43-\p{N + 44}}+\Rh_{\bar N}\p{N+43-\p{N + 38}}+\Rh_{\bar N}\p{N+43-39}\\
&=\Rh_{\bar N}\p{-1}+\Rh_{\bar N}\p{5}+\Rh_{\bar N}\p{N + 4}=0+5+\p{N + 3}=\mbb{N + 8}\\
&(N\geq5)
\end{align*}
\begin{align*}
\mbb{\Rh_{\bar N}\p{N+44}}&=\Rh_{\bar N}\p{N+44-\Rh_{\bar N}\p{N+43}}+\Rh_{\bar N}\p{N+44-\Rh_{\bar N}\p{N+42}}\\
&\phantom{=}+\Rh_{\bar N}\p{N+44-\Rh_{\bar N}\p{N+41}}\\
&=\Rh_{\bar N}\p{N+44-\p{N + 8}}+\Rh_{\bar N}\p{N+44-\p{N + 44}}\\
&\phantom{=}+\Rh_{\bar N}\p{N+44-\p{N + 38}}\\
&=\Rh_{\bar N}\p{36}+\Rh_{\bar N}\p{0}+\Rh_{\bar N}\p{6}=36+0+6=\mbb{42}\\
&(N\geq36)
\end{align*}
\begin{align*}
\mbb{\Rh_{\bar N}\p{N+45}}&=\Rh_{\bar N}\p{N+45-\Rh_{\bar N}\p{N+44}}+\Rh_{\bar N}\p{N+45-\Rh_{\bar N}\p{N+43}}\\
&\phantom{=}+\Rh_{\bar N}\p{N+45-\Rh_{\bar N}\p{N+42}}\\
&=\Rh_{\bar N}\p{N+45-42}+\Rh_{\bar N}\p{N+45-\p{N + 8}}+\Rh_{\bar N}\p{N+45-\p{N + 44}}\\
&=\Rh_{\bar N}\p{N + 3}+\Rh_{\bar N}\p{37}+\Rh_{\bar N}\p{1}=\p{N + 2}+37+1=\mbb{N + 40}\\
&(N\geq37)
\end{align*}
\begin{align*}
\mbb{\Rh_{\bar N}\p{N+46}}&=\Rh_{\bar N}\p{N+46-\Rh_{\bar N}\p{N+45}}+\Rh_{\bar N}\p{N+46-\Rh_{\bar N}\p{N+44}}\\
&\phantom{=}+\Rh_{\bar N}\p{N+46-\Rh_{\bar N}\p{N+43}}\\
&=\Rh_{\bar N}\p{N+46-\p{N + 40}}+\Rh_{\bar N}\p{N+46-42}+\Rh_{\bar N}\p{N+46-\p{N + 8}}\\
&=\Rh_{\bar N}\p{6}+\Rh_{\bar N}\p{N + 4}+\Rh_{\bar N}\p{38}=6+\p{N + 3}+38=\mbb{N + 47}\\
&(N\geq38)
\end{align*}
\begin{align*}
\mbb{\Rh_{\bar N}\p{N+47}}&=\Rh_{\bar N}\p{N+47-\Rh_{\bar N}\p{N+46}}+\Rh_{\bar N}\p{N+47-\Rh_{\bar N}\p{N+45}}\\
&\phantom{=}+\Rh_{\bar N}\p{N+47-\Rh_{\bar N}\p{N+44}}\\
&=\Rh_{\bar N}\p{N+47-\p{N + 47}}+\Rh_{\bar N}\p{N+47-\p{N + 40}}\\
&\phantom{=}+\Rh_{\bar N}\p{N+47-42}\\
&=\Rh_{\bar N}\p{0}+\Rh_{\bar N}\p{7}+\Rh_{\bar N}\p{N + 5}=0+7+9=\mbb{16}\\
&(N\geq7)
\end{align*}
\begin{align*}
\mbb{\Rh_{\bar N}\p{N+48}}&=\Rh_{\bar N}\p{N+48-\Rh_{\bar N}\p{N+47}}+\Rh_{\bar N}\p{N+48-\Rh_{\bar N}\p{N+46}}\\
&\phantom{=}+\Rh_{\bar N}\p{N+48-\Rh_{\bar N}\p{N+45}}\\
&=\Rh_{\bar N}\p{N+48-16}+\Rh_{\bar N}\p{N+48-\p{N + 47}}+\Rh_{\bar N}\p{N+48-\p{N + 40}}\\
&=\Rh_{\bar N}\p{N + 32}+\Rh_{\bar N}\p{1}+\Rh_{\bar N}\p{8}=\p{N + 30}+1+8=\mbb{N + 39}\\
&(N\geq8)
\end{align*}
\begin{align*}
\mbb{\Rh_{\bar N}\p{N+49}}&=\Rh_{\bar N}\p{N+49-\Rh_{\bar N}\p{N+48}}+\Rh_{\bar N}\p{N+49-\Rh_{\bar N}\p{N+47}}\\
&\phantom{=}+\Rh_{\bar N}\p{N+49-\Rh_{\bar N}\p{N+46}}\\
&=\Rh_{\bar N}\p{N+49-\p{N + 39}}+\Rh_{\bar N}\p{N+49-16}+\Rh_{\bar N}\p{N+49-\p{N + 47}}\\
&=\Rh_{\bar N}\p{10}+\Rh_{\bar N}\p{N + 33}+\Rh_{\bar N}\p{2}=10+\p{N + 35}+2=\mbb{N + 47}\\
&(N\geq10)
\end{align*}
\begin{align*}
\mbb{\Rh_{\bar N}\p{N+50}}&=\Rh_{\bar N}\p{N+50-\Rh_{\bar N}\p{N+49}}+\Rh_{\bar N}\p{N+50-\Rh_{\bar N}\p{N+48}}\\
&\phantom{=}+\Rh_{\bar N}\p{N+50-\Rh_{\bar N}\p{N+47}}\\
&=\Rh_{\bar N}\p{N+50-\p{N + 47}}+\Rh_{\bar N}\p{N+50-\p{N + 39}}+\Rh_{\bar N}\p{N+50-16}\\
&=\Rh_{\bar N}\p{3}+\Rh_{\bar N}\p{11}+\Rh_{\bar N}\p{N + 34}=3+11+\p{N + 13}=\mbb{N + 27}\\
&(N\geq11)
\end{align*}
\begin{align*}
\mbb{\Rh_{\bar N}\p{N+51}}&=\Rh_{\bar N}\p{N+51-\Rh_{\bar N}\p{N+50}}+\Rh_{\bar N}\p{N+51-\Rh_{\bar N}\p{N+49}}\\
&\phantom{=}+\Rh_{\bar N}\p{N+51-\Rh_{\bar N}\p{N+48}}\\
&=\Rh_{\bar N}\p{N+51-\p{N + 27}}+\Rh_{\bar N}\p{N+51-\p{N + 47}}\\
&\phantom{=}+\Rh_{\bar N}\p{N+51-\p{N + 39}}\\
&=\Rh_{\bar N}\p{24}+\Rh_{\bar N}\p{4}+\Rh_{\bar N}\p{12}=24+4+12=\mbb{40}\\
&(N\geq24)
\end{align*}
\begin{align*}
\mbb{\Rh_{\bar N}\p{N+52}}&=\Rh_{\bar N}\p{N+52-\Rh_{\bar N}\p{N+51}}+\Rh_{\bar N}\p{N+52-\Rh_{\bar N}\p{N+50}}\\
&\phantom{=}+\Rh_{\bar N}\p{N+52-\Rh_{\bar N}\p{N+49}}\\
&=\Rh_{\bar N}\p{N+52-40}+\Rh_{\bar N}\p{N+52-\p{N + 27}}+\Rh_{\bar N}\p{N+52-\p{N + 47}}\\
&=\Rh_{\bar N}\p{N + 12}+\Rh_{\bar N}\p{25}+\Rh_{\bar N}\p{5}=\p{N + 9}+25+5=\mbb{N + 39}\\
&(N\geq25)
\end{align*}
\begin{align*}
\mbb{\Rh_{\bar N}\p{N+53}}&=\Rh_{\bar N}\p{N+53-\Rh_{\bar N}\p{N+52}}+\Rh_{\bar N}\p{N+53-\Rh_{\bar N}\p{N+51}}\\
&\phantom{=}+\Rh_{\bar N}\p{N+53-\Rh_{\bar N}\p{N+50}}\\
&=\Rh_{\bar N}\p{N+53-\p{N + 39}}+\Rh_{\bar N}\p{N+53-40}+\Rh_{\bar N}\p{N+53-\p{N + 27}}\\
&=\Rh_{\bar N}\p{14}+\Rh_{\bar N}\p{N + 13}+\Rh_{\bar N}\p{26}=14+15+26=\mbb{55}\\
&(N\geq26)
\end{align*}
\begin{align*}
\mbb{\Rh_{\bar N}\p{N+54}}&=\Rh_{\bar N}\p{N+54-\Rh_{\bar N}\p{N+53}}+\Rh_{\bar N}\p{N+54-\Rh_{\bar N}\p{N+52}}\\
&\phantom{=}+\Rh_{\bar N}\p{N+54-\Rh_{\bar N}\p{N+51}}\\
&=\Rh_{\bar N}\p{N+54-55}+\Rh_{\bar N}\p{N+54-\p{N + 39}}+\Rh_{\bar N}\p{N+54-40}\\
&=\Rh_{\bar N}\p{N - 1}+\Rh_{\bar N}\p{15}+\Rh_{\bar N}\p{N + 14}\\
&=\p{N - 1}+15+\p{N + 10}=\mbb{2N + 24}\\
&(N\geq15)
\end{align*}
\begin{align*}
\mbb{\Rh_{\bar N}\p{N+55}}&=\Rh_{\bar N}\p{N+55-\Rh_{\bar N}\p{N+54}}+\Rh_{\bar N}\p{N+55-\Rh_{\bar N}\p{N+53}}\\
&\phantom{=}+\Rh_{\bar N}\p{N+55-\Rh_{\bar N}\p{N+52}}\\
&=\Rh_{\bar N}\p{N+55-\p{2N + 24}}+\Rh_{\bar N}\p{N+55-55}+\Rh_{\bar N}\p{N+55-\p{N + 39}}\\
&=\Rh_{\bar N}\p{-N + 31}+\Rh_{\bar N}\p{N}+\Rh_{\bar N}\p{16}=0+N+16=\mbb{N + 16}\\
&(N\geq31)
\end{align*}
\begin{align*}
\mbb{\Rh_{\bar N}\p{N+56}}&=\Rh_{\bar N}\p{N+56-\Rh_{\bar N}\p{N+55}}+\Rh_{\bar N}\p{N+56-\Rh_{\bar N}\p{N+54}}\\
&\phantom{=}+\Rh_{\bar N}\p{N+56-\Rh_{\bar N}\p{N+53}}\\
&=\Rh_{\bar N}\p{N+56-\p{N + 16}}+\Rh_{\bar N}\p{N+56-\p{2N + 24}}+\Rh_{\bar N}\p{N+56-55}\\
&=\Rh_{\bar N}\p{40}+\Rh_{\bar N}\p{-N + 32}+\Rh_{\bar N}\p{N + 1}=40+0+6=\mbb{46}\\
&(N\geq40)
\end{align*}
\begin{align*}
\mbb{\Rh_{\bar N}\p{N+57}}&=\Rh_{\bar N}\p{N+57-\Rh_{\bar N}\p{N+56}}+\Rh_{\bar N}\p{N+57-\Rh_{\bar N}\p{N+55}}\\
&\phantom{=}+\Rh_{\bar N}\p{N+57-\Rh_{\bar N}\p{N+54}}\\
&=\Rh_{\bar N}\p{N+57-46}+\Rh_{\bar N}\p{N+57-\p{N + 16}}+\Rh_{\bar N}\p{N+57-\p{2N + 24}}\\
&=\Rh_{\bar N}\p{N + 11}+\Rh_{\bar N}\p{41}+\Rh_{\bar N}\p{-N + 33}=\p{N + 8}+41+0=\mbb{N + 49}\\
&(N\geq41)
\end{align*}
\begin{align*}
\mbb{\Rh_{\bar N}\p{N+58}}&=\Rh_{\bar N}\p{N+58-\Rh_{\bar N}\p{N+57}}+\Rh_{\bar N}\p{N+58-\Rh_{\bar N}\p{N+56}}\\
&\phantom{=}+\Rh_{\bar N}\p{N+58-\Rh_{\bar N}\p{N+55}}\\
&=\Rh_{\bar N}\p{N+58-\p{N + 49}}+\Rh_{\bar N}\p{N+58-46}+\Rh_{\bar N}\p{N+58-\p{N + 16}}\\
&=\Rh_{\bar N}\p{9}+\Rh_{\bar N}\p{N + 12}+\Rh_{\bar N}\p{42}=9+\p{N + 9}+42=\mbb{N + 60}\\
&(N\geq42)
\end{align*}
\begin{align*}
\mbb{\Rh_{\bar N}\p{N+59}}&=\Rh_{\bar N}\p{N+59-\Rh_{\bar N}\p{N+58}}+\Rh_{\bar N}\p{N+59-\Rh_{\bar N}\p{N+57}}\\
&\phantom{=}+\Rh_{\bar N}\p{N+59-\Rh_{\bar N}\p{N+56}}\\
&=\Rh_{\bar N}\p{N+59-\p{N + 60}}+\Rh_{\bar N}\p{N+59-\p{N + 49}}+\Rh_{\bar N}\p{N+59-46}\\
&=\Rh_{\bar N}\p{-1}+\Rh_{\bar N}\p{10}+\Rh_{\bar N}\p{N + 13}=0+10+15=\mbb{25}\\
&(N\geq10)
\end{align*}
\begin{align*}
\mbb{\Rh_{\bar N}\p{N+60}}&=\Rh_{\bar N}\p{N+60-\Rh_{\bar N}\p{N+59}}+\Rh_{\bar N}\p{N+60-\Rh_{\bar N}\p{N+58}}\\
&\phantom{=}+\Rh_{\bar N}\p{N+60-\Rh_{\bar N}\p{N+57}}\\
&=\Rh_{\bar N}\p{N+60-25}+\Rh_{\bar N}\p{N+60-\p{N + 60}}+\Rh_{\bar N}\p{N+60-\p{N + 49}}\\
&=\Rh_{\bar N}\p{N + 35}+\Rh_{\bar N}\p{0}+\Rh_{\bar N}\p{11}=27+0+11=\mbb{38}\\
&(N\geq11)
\end{align*}
\begin{align*}
\mbb{\Rh_{\bar N}\p{N+61}}&=\Rh_{\bar N}\p{N+61-\Rh_{\bar N}\p{N+60}}+\Rh_{\bar N}\p{N+61-\Rh_{\bar N}\p{N+59}}\\
&\phantom{=}+\Rh_{\bar N}\p{N+61-\Rh_{\bar N}\p{N+58}}\\
&=\Rh_{\bar N}\p{N+61-38}+\Rh_{\bar N}\p{N+61-25}+\Rh_{\bar N}\p{N+61-\p{N + 60}}\\
&=\Rh_{\bar N}\p{N + 23}+\Rh_{\bar N}\p{N + 36}+\Rh_{\bar N}\p{1}=21+36+1=\mbb{58}\\
&(N\geq1)
\end{align*}
\begin{align*}
\mbb{\Rh_{\bar N}\p{N+62}}&=\Rh_{\bar N}\p{N+62-\Rh_{\bar N}\p{N+61}}+\Rh_{\bar N}\p{N+62-\Rh_{\bar N}\p{N+60}}\\
&\phantom{=}+\Rh_{\bar N}\p{N+62-\Rh_{\bar N}\p{N+59}}\\
&=\Rh_{\bar N}\p{N+62-58}+\Rh_{\bar N}\p{N+62-38}+\Rh_{\bar N}\p{N+62-25}\\
&=\Rh_{\bar N}\p{N + 4}+\Rh_{\bar N}\p{N + 24}+\Rh_{\bar N}\p{N + 37}\\
&=\p{N + 3}+\p{2N + 11}+\p{N + 37}=\mbb{4N + 51}\\
&(N\geq1)
\end{align*}
\begin{align*}
\mbb{\Rh_{\bar N}\p{N+63}}&=\Rh_{\bar N}\p{N+63-\Rh_{\bar N}\p{N+62}}+\Rh_{\bar N}\p{N+63-\Rh_{\bar N}\p{N+61}}\\
&\phantom{=}+\Rh_{\bar N}\p{N+63-\Rh_{\bar N}\p{N+60}}\\
&=\Rh_{\bar N}\p{N+63-\p{4N + 51}}+\Rh_{\bar N}\p{N+63-58}+\Rh_{\bar N}\p{N+63-38}\\
&=\Rh_{\bar N}\p{-3N + 12}+\Rh_{\bar N}\p{N + 5}+\Rh_{\bar N}\p{N + 25}\\
&=0+9+\p{2N + 5}=\mbb{2N + 14}\\
&(N\geq4)
\end{align*}
\begin{align*}
\mbb{\Rh_{\bar N}\p{N+64}}&=\Rh_{\bar N}\p{N+64-\Rh_{\bar N}\p{N+63}}+\Rh_{\bar N}\p{N+64-\Rh_{\bar N}\p{N+62}}\\
&\phantom{=}+\Rh_{\bar N}\p{N+64-\Rh_{\bar N}\p{N+61}}\\
&=\Rh_{\bar N}\p{N+64-\p{2N + 14}}+\Rh_{\bar N}\p{N+64-\p{4N + 51}}+\Rh_{\bar N}\p{N+64-58}\\
&=\Rh_{\bar N}\p{-N + 50}+\Rh_{\bar N}\p{-3N + 13}+\Rh_{\bar N}\p{N + 6}\\
&=0+0+\p{N + 4}=\mbb{N + 4}\\
&(N\geq50)
\end{align*}
\begin{align*}
\mbb{\Rh_{\bar N}\p{N+65}}&=\Rh_{\bar N}\p{N+65-\Rh_{\bar N}\p{N+64}}+\Rh_{\bar N}\p{N+65-\Rh_{\bar N}\p{N+63}}\\
&\phantom{=}+\Rh_{\bar N}\p{N+65-\Rh_{\bar N}\p{N+62}}\\
&=\Rh_{\bar N}\p{N+65-\p{N + 4}}+\Rh_{\bar N}\p{N+65-\p{2N + 14}}\\
&\phantom{=}+\Rh_{\bar N}\p{N+65-\p{4N + 51}}\\
&=\Rh_{\bar N}\p{61}+\Rh_{\bar N}\p{-N + 51}+\Rh_{\bar N}\p{-3N + 14}=61+0+0=\mbb{61}\\
&(N\geq61)
\end{align*}
\begin{align*}
\mbb{\Rh_{\bar N}\p{N+66}}&=\Rh_{\bar N}\p{N+66-\Rh_{\bar N}\p{N+65}}+\Rh_{\bar N}\p{N+66-\Rh_{\bar N}\p{N+64}}\\
&\phantom{=}+\Rh_{\bar N}\p{N+66-\Rh_{\bar N}\p{N+63}}\\
&=\Rh_{\bar N}\p{N+66-61}+\Rh_{\bar N}\p{N+66-\p{N + 4}}+\Rh_{\bar N}\p{N+66-\p{2N + 14}}\\
&=\Rh_{\bar N}\p{N + 5}+\Rh_{\bar N}\p{62}+\Rh_{\bar N}\p{-N + 52}=9+62+0=\mbb{71}\\
&(N\geq62)
\end{align*}
\begin{align*}
\mbb{\Rh_{\bar N}\p{N+67}}&=\Rh_{\bar N}\p{N+67-\Rh_{\bar N}\p{N+66}}+\Rh_{\bar N}\p{N+67-\Rh_{\bar N}\p{N+65}}\\
&\phantom{=}+\Rh_{\bar N}\p{N+67-\Rh_{\bar N}\p{N+64}}\\
&=\Rh_{\bar N}\p{N+67-71}+\Rh_{\bar N}\p{N+67-61}+\Rh_{\bar N}\p{N+67-\p{N + 4}}\\
&=\Rh_{\bar N}\p{N - 4}+\Rh_{\bar N}\p{N + 6}+\Rh_{\bar N}\p{63}\\
&=\p{N - 4}+\p{N + 4}+63=\mbb{2N + 63}\\
&(N\geq63)
\end{align*}
\begin{align*}
\mbb{\Rh_{\bar N}\p{N+68}}&=\Rh_{\bar N}\p{N+68-\Rh_{\bar N}\p{N+67}}+\Rh_{\bar N}\p{N+68-\Rh_{\bar N}\p{N+66}}\\
&\phantom{=}+\Rh_{\bar N}\p{N+68-\Rh_{\bar N}\p{N+65}}\\
&=\Rh_{\bar N}\p{N+68-\p{2N + 63}}+\Rh_{\bar N}\p{N+68-71}+\Rh_{\bar N}\p{N+68-61}\\
&=\Rh_{\bar N}\p{-N + 5}+\Rh_{\bar N}\p{N - 3}+\Rh_{\bar N}\p{N + 7}\\
&=0+\p{N - 3}+\p{N + 5}=\mbb{2N + 2}\\
&(N\geq5)
\end{align*}
\begin{align*}
\mbb{\Rh_{\bar N}\p{N+69}}&=\Rh_{\bar N}\p{N+69-\Rh_{\bar N}\p{N+68}}+\Rh_{\bar N}\p{N+69-\Rh_{\bar N}\p{N+67}}\\
&\phantom{=}+\Rh_{\bar N}\p{N+69-\Rh_{\bar N}\p{N+66}}\\
&=\Rh_{\bar N}\p{N+69-\p{2N + 2}}+\Rh_{\bar N}\p{N+69-\p{2N + 63}}+\Rh_{\bar N}\p{N+69-71}\\
&=\Rh_{\bar N}\p{-N + 67}+\Rh_{\bar N}\p{-N + 6}+\Rh_{\bar N}\p{N - 2}=0+0+\p{N - 2}=\mbb{N - 2}\\
&(N\geq67)
\end{align*}


\begin{bibdiv}
	\begin{biblist}
		\bib{con}{misc}{
			author={Conolly, B.W.},
			title={Meta-{F}ibonacci sequences, {C}hapter {X}{I}{I} in {S}. {V}ajda,
				{F}ibonacci \& {L}ucas numbers, and the golden section},
			publisher={Ellis Horwood Limited},
			address={Chichester, England},
			date={1989},
		}
		
		\bib{spotbased}{article}{
			author={Dalton, Barnaby},
			author={Rahman, Mustazee},
			author={Tanny, Stephen},
			title={Spot-based generations for meta-{F}ibonacci sequences},
			date={2011},
			journal={Experimental Mathematics},
			volume={20},
			number={2},
			pages={129\ndash 137},
		}
		
		\bib{genrusk}{article}{
			author={Fox, Nathan},
			title={Linear recurrent subsequences of generalized meta-{F}ibonacci
				sequences},
			date={2016},
			journal={J. Difference Equ. Appl.},
			pages={1019\ndash 1026},
		}
		
		\bib{gengol}{article}{
			author={Fox, Nathan},
			title={Quasipolynomial solutions to the {H}ofstadter {Q}-recurrence},
			date={2016},
			journal={Integers},
			volume={16},
			pages={A68},
		}
		
		\bib{thesis}{thesis}{
			author={Fox, Nathan},
			title={An exploration of nested recurrences using {E}xperimental
				{M}athematics},
			type={Ph.D. Thesis},
			date={2017},
		}
		
		\bib{slowtrihof}{article}{
			author={Fox, Nathan},
			title={A slow relative of {H}ofstadter's {Q}-sequence},
			date={2017},
			journal={J. Integer Seq.},
			volume={20},
			number={2},
			pages={3},
		}
		
		\bib{symbhof}{article}{
			author={Fox, Nathan},
			title={Discovering linear-recurrent solutions to {H}ofstadter-like
				recurrences using symbolic computation},
			date={2018},
			journal={J. Symbolic Comput.},
			volume={87},
			pages={99\ndash 126},
		}
		
		\bib{hof1thruN}{article}{
			author={Fox, Nathan},
			title={A new approach to the {H}ofstadter {Q}-recurrence},
			date={2020},
			journal={Integers},
			volume={20A},
			pages={A8},
		}
		
		\bib{golomb}{article}{
			author={Golomb, S.W.},
			title={Discrete chaos: Sequences satisfying \quot{Strange} recursions},
			date={1991},
		}
		
		\bib{geb}{book}{
			author={Hofstadter, Douglas},
			title={G\"odel, {E}scher, {B}ach: an {E}ternal {G}olden {B}raid},
			publisher={Basic Books},
			address={New York},
			date={1979},
		}
		
		\bib{pinn}{article}{
			author={Pinn, Klaus},
			title={Order and chaos in {H}ofstadter's {Q}(n) sequence},
			date={1999},
			journal={Complexity},
			volume={4},
			number={3},
			pages={41\ndash 46},
		}
		
		\bib{rusk}{article}{
			author={Ruskey, F.},
			title={Fibonacci meets {H}ofstadter},
			date={2011},
			journal={Fibonacci Quart.},
			volume={49},
			number={3},
			pages={227\ndash 230},
		}
		
		\bib{oeis}{misc}{
			author={Sloane, N.J.A.},
			title={{O}{E}{I}{S} {F}oundation {I}nc.},
			publisher={The On-Line Encyclopedia of Integer Sequences},
			date={2023},
			note={http://oeis.org/},
		}
		
		\bib{tanny}{article}{
			author={Tanny, Stephen~M},
			title={A well-behaved cousin of the {H}ofstadter sequence},
			date={1992},
			journal={Discrete Math.},
			volume={105},
			number={1},
			pages={227\ndash 239},
		}
	\end{biblist}
\end{bibdiv}
\end{document}